\numberwithin{equation}{section}
\theoremstyle{plain}
\newtheorem{theorem}{Theorem}[section]
\newtheorem{lemma}[theorem]{Lemma}
\newtheorem{proposition}[theorem]{Proposition}
\newtheorem{conjecture}[theorem]{Conjecture}
\theoremstyle{definition}
\theoremstyle{remark}
\newtheorem{case[theorem]}{Case}
\begin{document}
\title{On restriction estimates for the zero radius sphere \\over finite fields}
\author{Alex Iosevich \thanks{Department of Mathematics, University of Rochester New York. Email: iosevich@math.rochester.edu}\and  Doowon Koh \thanks{Department of Mathematics, Chungbuk National University. Email: koh131@chungbuk.ac.kr}\and Sujin Lee \thanks{Department of Mathematics, Chungbuk National University. Email: sujin4432@chungbuk.ac.kr} \and 
Thang Pham \thanks{Department of Mathematics, University of Rochester New York. Email: vpham2@math.rochester.edu}\and Chun-Yen Shen \thanks{Department of Mathematics, National Taiwan University. Email address: cyshen@math.ntu.edu.tw}}
\maketitle
\begin{abstract}  In this paper, we solve completely the $L^2\to L^r$ extension conjecture for the zero radius sphere over finite fields. 
We also obtain the sharp $L^p\to L^4$ extension estimate for non-zero radii spheres over finite fields, which improves significantly the previous result of the first and second listed authors. 
\end{abstract}

\section{Introduction} 
In the last few decades, much attention has been given to the Fourier restriction/extension problem for various surfaces, in part because it is closely related to 
questions about partial differential equations as well as problems in geometric measure theory such as the Kakeya conjecture. Given a surface $V$ in $\mathbb R^d$ endowed with surface measure $d\sigma$,
the extension problem for $V$ is to determine all pairs of exponents $(p,r)$ with $1\le p,r\le \infty$ such that the extension inequality
$$ \|(fd\sigma)^\vee\|_{L^r(\mathbb R^d)} \le C \|f\|_{L^p(V,\,d\sigma)}$$
holds for all functions $f \in L^p(V,\,d\sigma).$
By duality, the extension inequality above is equivalent to the following restriction inequality
$$ \|\widehat{g}\|_{L^{p^\prime}(V,d\sigma)} \le C \|g\|_{L^{r^\prime}(\mathbb R^d)},$$
where $p^\prime$ and $r^\prime$ denote usual H\"{o}lder's conjugates of $p$ and $r,$ respectively (i.e. $p^\prime=p/(p-1)$ and $r^\prime=r/(r-1)$).  Since this problem was initiated by Stein \cite{St78},  lots of deep results have been established but the problem remains open in higher dimensions. We refer the reader to \cite{Zy74, Ba85, Wo01, Ta04, Ta03, Gu15} for a more detailed description and recent developments on the Euclidean restriction conjecture.

In 2002, Mockenhaupt and Tao \cite{MT04} initially studied the finite field analogue of the Fourier restriction/extension problem for various algebraic varieties. In this introduction we review the definition and state our main results on this problem for spheres.

Let $\mathbb F_q^d$ be the $d$-dimensional vector space over a finite field $\mathbb F_q$ with $q$ elements. Throughout this paper we assume that $q$ is an odd prime power. We endow the space $\mathbb F_q^d$ with a counting measure $dm$. We write $\mathbb F_{q*}^d$ for the dual space of $\mathbb F_q^d$  which is endowed with the normalized counting measure $dx.$ Given an algebraic variety $V$ in $\mathbb F_{q*}^d$ we endow it with the normalized surface measure $d\sigma$ which assigns a mass of $|V|^{-1}$ to each point in $V.$ Notice that the surface measure $d\sigma$ on $V$ can be written by $\frac{q^d}{|V|} 1_{V} dx$, a measure on $\mathbb F_{q^*}^d,$ namely,
$$ d\sigma= \frac{q^d}{|V|} 1_{V} dx.$$ 
Hence, we can view the measure $d\sigma$ on $V$ as a function $\frac{q^d}{|V|} 1_{V}$ on $\mathbb F_{q^*}^d.$
Let $\chi$ denote the canonical additive character of $\mathbb F_q$. We  recall that the orthogonality relation of $\chi$ states that
 $$ \sum_{m\in \mathbb F_q^d} \chi(x\cdot m) 
 = \left\{  \begin{array}{ll} 0 \quad&\mbox{if}~~x\ne (0,\ldots,0) \\
q^d \quad&\mbox{if}~~ x=(0,\ldots,0).\end{array}\right.$$

 Given a complex-valued function $g: \mathbb F_q^d \to \mathbb C,$ the Fourier transform of $g$ is defined by
$$ \widehat{g}(x)=\sum_{m\in \mathbb F_q^d} g(m) \chi(-x\cdot m).$$ 
Let $f: \mathbb F_{q*}^d \to \mathbb C$ be a complex-valued function on the dual space $\mathbb F_{q*}^d$ of $\mathbb F_q^d.$ Then the inverse Fourier transform of $f$ is defined by
$$ f^{\vee} (m)= \frac{1}{q^d} \sum_{x\in \mathbb F_{q*}^d} f(x) \chi(m\cdot x).$$
Moreover, the inverse Fourier transform of the measure $f d\sigma$  is given by
$$ (fd\sigma)^{\vee} (m)= \frac{1}{|V|} \sum_{x\in V} f(x) \chi(m\cdot x)$$
where $d\sigma$ denotes the normalized surface measure on an algebraic variety $V\subset \mathbb F_{q*}^d.$ \\

Since the space $\mathbb F_q^d$ is isomorphic to its dual space $\mathbb F_{q*}^d$ as an abstract group, we  shall simply write $\mathbb F_q^d$ for $\mathbb F_{q*}^d$ in the case when the measure on $\mathbb F_{q*}^d$  does not play an important role.  For example, to denote a sum over $x \in \mathbb F_{q*}^d$,  we write $\sum_{x\in \mathbb F_q^d}$ for $\sum_{x\in \mathbb F_{q*}^d}.$ It is clear that 
 $$f(\alpha)=\widehat{(f^\vee)}(\alpha)=\sum_{\beta\in \mathbb F_q^d} f^\vee(\beta) \chi(-\alpha\cdot \beta),$$
 which is called the Fourier inversion formula. Expanding $|\widehat{g}(x)|^2$ by the definition of the Foureir transform, and using the orthogonality relation of $\chi,$ one can easily deduce  the following equation: 
$$ q^{-d}\sum_{x\in \mathbb F_q^d} |\widehat{g}(x)|^2 = \sum_{m\in \mathbb F_q^d} |g(m)|^2.$$
 This formula is refered to as the Plancherel theorem, and it can be rewritten as
$$ \|\widehat{g}\|_{L^2(\mathbb F_{q*}^d, dx)}=\|g\|_{L^2(\mathbb F_q^d, dm)}.$$
With notation above, the Fourier extension problem for $V$ is to determine  exponents $1\le p, r\le \infty$ such that the extension inequality
\begin{equation}\label{extensionDef} \| (fd\sigma)^{\vee}\|_{L^r(\mathbb F_q^d, dm)} \ll \|f\|_{L^p(V, d\sigma)}\end{equation}
holds for any functions $f$ on $V$ with the operator norm  independent of the size of the underlying finite field $\mathbb F_q.$ 

Here, and throughout the paper, we use $X \ll Y$ or $Y\gg X$ to denote that there exists a constant $C>0$ independent of $q$ such that $ X\le C Y.$ In addition, $X\sim Y$ means that $X\ll Y$ and $Y\ll X.$ 

We use the notation $R_V^*(p\to r)\ll 1$ to indicate that the above extension estimate \eqref{extensionDef} holds. By duality (see Theorem \ref{ThmDual} in Appendix), the extension estimate \eqref{extensionDef} is equivalent to the following Fourier restriction estimate:
$$ \| \widehat{g}\|_{L^{p'}(V, d\sigma)} \ll \|g\|_{L^{r'}(\mathbb F_q^d, dm)}.$$
Necessary conditions for $R_V^*(p\to r)\ll 1$ can be given in terms of $|V|$ and  the cardinality of an affine subspace $H$ lying on $V.$  Indeed, Mockenhaupt and Tao  \cite{MT04}  showed that if $V\subset \mathbb F_{q*}^d$ with $|V|\sim q^{d-1}$ and it  contains an affine subspace $H$ with $|H|=q^k,$ then the necessary conditions are given by 
\begin{equation}\label{Necessary2}
r\geq \frac{2d}{d-1}  \quad \mbox{and} \quad r\geq\frac{p(d-k)}{(p-1)(d-1-k)}.\end{equation}

When the variety $V\subset \mathbb F_{q*}^d$ is  either a paraboloid  or a sphere, the restriction/extension problem has received much attention. 
Here we recall that the paraboloid $P$ in $\mathbb F_{q*}^d$ is defined by
$$ P:=\{x\in \mathbb F_{q*}^d: x_1^2+\cdots+x_{d-1}^2=x_d\}$$
and the sphere $S_j, j\in \mathbb F_q,$ is defined by 
\begin{equation}\label{DefS} 
S_j:=\{x\in \mathbb F_{q*}^d: x_1^2 +\cdots+ x_d^2=j\}.
\end{equation}
The restriction/extension conjectures for curves on $\mathbb F_{q*}^2$ were completely solved. For example, Mockenhaupt and Tao \cite{MT04} established  $R^*_P(2\to 4)\ll 1$ for the parabola, which implies the conjecture, and the second and fifth listed authors \cite{KS12} showed that the result holds for any curve which does not contain a line. However, the finite field restriction problem for varieties in higher dimensions has not been solved.

In the finite field setting, it would be very interesting and hard to establish the sharp $L^2\to L^r$ or $L^p\to L^4$ estimate for the sphere or the paraboloid in even dimensions, because the conjectured results in those cases are much better than the Stein-Tomas result, the $L^2\to L^{(2d+2)/(d-1)}$ bound, which can be obtained by using the Stein-Tomas argument. It was shown by Mockenhaupt and Tao \cite{MT04} that  an $L^2\to L^r$ extension result for the paraboloid can be obtained from a consequence of an $L^p\to L^4$ extension estimate. 
Developing their method with the optimal $L^p\to L^4$ extension estimate, the authors of \cite{LL10} obtained the $L^2\to L^{2d^2/(d^2-2d+2) }$ extension estimate for the paraboloid in even dimensions. This result is much better than the Stein-Tomas result. Further improvement was made by Lewko \cite{Le13} who reduced the $L^2\to L^r$ extension problem to the estimation of the additive energy 
$$E(A):=|\{(x,y,z,w)\in A^4: x+y=z+w\}|$$ for $A\subset P.$ 

In recent papers \cite{IKL17, RS18, kov}, improving this additive energy is the crucial step to get new results on $L^2$-extension theorems for the paraboloid. For example, in a recent work \cite{RS18}, with a new additive energy bound, Rudnev and Shkredov followed the scheme in \cite{IKL17} to obtain the sharp $L^2\to L^{3}$ extension estimate for the paraboloid in the dimension $4$. For any $d\ge 8$ even, the sharp $L^2\to L^{(2d+4)/d}$ extension estimate for the paraboloid was obtained by the first, second listed authors and Lewko in an earlier work \cite{IKL17}.
\subsection*{The focus of this article}
In this paper we develop the restriction/extension theory for spheres $S_j$ in $\mathbb F_{q*}^d.$ 

Let $S_0\subset \mathbb F_{q^*}^d$ be the zero radius sphere.
As mentioned in \eqref{Necessary2},  necessary conditions for $R_{S_0}^*(p\to r)$ bound can be determined by the size of a subspace $H$ lying in $S_0.$ In particular, if $|H|=q^{\ell},$ then a necessary condition for $R^*_{S_0}(2\to r)$ bound is given by 
\begin{equation}\label{necea}  r\geq\frac{2(d-\ell)}{(d-1-\ell)}.\end{equation}
 The following lemma is well known.
 \begin{lemma} [\cite{Vi12}]\label{clem} Let $H$ be a maximal subspace lying on the zero radius sphere $S_0\subset \mathbb F_q^d.$  Then the following statements hold:
\begin{enumerate} 
\item If $d=4k-2, k\in \mathbb N,$ and $q\equiv 3 \mod{4},$ then    $|H|=q^{\frac{d-2}{2}}.$
\item If $d= 4k$ (or $d=4k-2$ with $q\equiv 1\mod{4}),$ then 
$|H|=q^{\frac{d}{2}}.$
\item If $d\ge 3$ is odd, then $|H|=q^{(d-1)/2}.$
\end{enumerate}
\end{lemma}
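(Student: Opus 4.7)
The plan is to identify $S_0$ as the isotropic cone of the quadratic form $Q(x) = x_1^2 + \cdots + x_d^2$ and then show that the maximal affine subspace dimension on $S_0$ equals the Witt index of $Q$. Concretely, if $H = v + W$ with $v \in S_0$ and $W$ a linear subspace, then for every $w \in W$ and $\lambda \in \mathbb{F}_q$ we have $0 = Q(v + \lambda w) = \lambda^2 Q(w) + 2\lambda \langle v, w\rangle$, and letting $\lambda$ vary forces $Q(w) = 0$ and $\langle v, w\rangle = 0$ separately (since $q$ is odd). Hence $W$ must be totally isotropic; conversely, any totally isotropic subspace is itself an affine subspace of $S_0$ through the origin, so the maximum possible $|H|$ equals $q^\ell$ where $\ell$ is the Witt index of $Q$.

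Next, I would invoke the classification of non-degenerate quadratic forms over $\mathbb{F}_q$ (with $q$ odd). In even dimension $2m$ there are exactly two equivalence classes, the hyperbolic form of Witt index $m$ and discriminant $(-1)^m$ (modulo squares in $\mathbb{F}_q^\ast$), and the ``elliptic'' form of Witt index $m-1$ whose anisotropic kernel is a 2-dimensional form like $x^2 - \alpha y^2$ with $\alpha$ a nonsquare. In odd dimension $2m+1$ the Witt index is always $m$. Since our $Q = \sum x_i^2$ has discriminant $1$, in even dimension $d = 2m$ it is hyperbolic precisely when $(-1)^m$ is a square in $\mathbb{F}_q^\ast$.

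A case analysis then matches the three items of the lemma. If $d = 4k$ then $m = 2k$ is even, so $(-1)^m = 1$ automatically and the Witt index is $d/2$. If $d = 4k-2$ then $m = 2k-1$ is odd, so $(-1)^m = -1$ is a square iff $q \equiv 1 \pmod 4$; in that case the Witt index is $d/2$, otherwise it is $(d-2)/2$. If $d = 2m+1$ is odd the Witt index is $(d-1)/2$ regardless of $q$. These three conclusions match cases (1)--(3) of the lemma exactly.

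The main obstacle I anticipate is giving (or citing cleanly) the hyperbolicity criterion. A self-contained proof proceeds by iteratively splitting off hyperbolic planes: for $d \geq 3$ one solves $x_1^2 + x_2^2 + x_3^2 = 0$ nontrivially by a pigeonhole/character-sum argument (the sets $\{x_1^2\}$ and $\{-x_2^2 - x_3^2\}$ have sizes summing to more than $q$), which produces two isotropic vectors with nonzero inner product and hence a hyperbolic plane inside $Q$. Applying Witt cancellation and iterating reduces $Q$ to an anisotropic kernel of dimension $0$, $1$, or $2$; the kernel is $\{0\}$ exactly in the hyperbolic case, a scalar square form when $d$ is odd, and a $2$-dimensional form $x^2 + y^2$ with $q \equiv 3 \pmod 4$ in the remaining even case. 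Tracking the parities of $d$ and $m$ through this reduction is the delicate bookkeeping, but it yields precisely the three stated Witt indices.
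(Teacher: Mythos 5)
The paper does not actually prove Lemma~\ref{clem}; it cites the result to Vinh \cite{Vi12}. Your argument is a correct, essentially self-contained proof of the cited fact, and it proceeds by exactly the route one would expect: identifying the maximal affine subspace on the isotropic cone with a maximal totally isotropic subspace (so its dimension is the Witt index of $Q=\sum x_i^2$), and then reading the Witt index off from the classification of nondegenerate quadratic forms over $\mathbb F_q$ by dimension and discriminant. Your reduction from affine to linear is sound: expanding $Q(v+\lambda w)=0$ for all $\lambda$ forces $Q(w)=0$ and $\langle v,w\rangle=0$ since the quadratic polynomial in $\lambda$ has at least $q\ge 3$ roots, and polarizing over $w,w'\in W$ gives total isotropy; conversely a maximal totally isotropic $W$ sits inside $S_0$ through the origin. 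The discriminant bookkeeping is also right: $\sum_{i=1}^{2m}x_i^2$ has discriminant $1$, a hyperbolic plane has discriminant $-1$, so hyperbolicity is equivalent to $(-1)^m$ being a square, which for $m$ odd ($d\equiv 2\bmod 4$) holds iff $q\equiv 1\bmod 4$; the odd-dimensional case has Witt index $(d-1)/2$ unconditionally. This matches all three items of the lemma. Your concluding sketch of the hyperbolicity criterion (finding an isotropic vector by the pigeonhole count $(q+1)/2+(q+1)/2>q$, splitting off a hyperbolic plane, iterating with Witt cancellation) is the standard self-contained route; citing the classification directly, e.g.\ from Lidl--Niederreiter or Grove, would serve equally well.
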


 The following is the conjecture of the sharp $L^2\to L^r$ extension estimate for the zero radius sphere, which can be easily made from (\ref{necea}) and Lemma \ref{clem}. 
 
\bigskip

\begin{conjecture}\label{conjzero}
Let $S_0$ be the zero radius sphere in $\mathbb F_q^d.$
Then the following statements hold:
\begin{enumerate} 
\item If $d=4k+2, k\in \mathbb N,$ and $q\equiv 3 \mod{4},$ then    $R^*_{S_0}\left(2\to \frac{2d+4}{d}\right)\ll 1.$
\item If $d= 4k$ (or $d=4k-2$ with $q\equiv 1\mod{4}),$ then 
$R^*_{S_0}\left(2\to \frac{2d}{d-2}\right)\ll 1.$
\item If $d\ge 3$ is odd, then $R^*_{S_0}\left(2\to \frac{2d+2}{d-1}\right)\ll 1.$
\end{enumerate}
\end{conjecture}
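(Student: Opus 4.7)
The plan is to handle the three cases of Conjecture~\ref{conjzero} separately, since the structure of $S_0$ differs essentially with the parity of $d$.

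For case 3 ($d$ odd), the target exponent $r=(2d+2)/(d-1)$ is exactly the Stein-Tomas exponent. A routine Gauss-sum computation gives $|S_0|=q^{d-1}$ exactly and the uniform decay $|(d\sigma_{S_0})^\vee(m)-\delta_0(m)|\lesssim q^{-(d-1)/2}$ for $m\neq 0$; in fact, one can verify that $(d\sigma_{S_0})^\vee$ vanishes on the null cone away from the origin and has modulus $\lesssim q^{-(d-1)/2}$ on its complement. These are precisely the hypotheses of the Mockenhaupt--Tao Stein-Tomas theorem in \cite{MT04}, which yields the estimate without modification.

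The even-dimensional cases (1 and 2) are substantially harder because Stein-Tomas is no longer available: for $d$ even, $(d\sigma_{S_0})^\vee$ fails to decay like $q^{-(d-1)/2}$ on the null cone, and $|S_0|$ itself is of order $q^{d-1}+O(q^{d/2})$ rather than $q^{d-1}$ exactly. The conjectured exponent $r=2(d-\ell)/(d-1-\ell)$ is strictly below Stein-Tomas and is forced by the maximal isotropic subspace $H\subset S_0$ of dimension $\ell\in\{(d-2)/2,d/2\}$ from Lemma~\ref{clem}. My plan is to exploit $H$ essentially: for any $\ell$-dimensional isotropic subspace $H'\subset S_0$, the operator $f\mapsto(1_{H'}f\,d\sigma_{S_0})^\vee$ yields a function supported on the orthogonal complement of $H'$, with an exact $L^2\to L^r$ bound at the conjectured exponent. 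I would then decompose a general $f$ according to a covering of $S_0\setminus\{0\}$ by orbits of such subspaces under the orthogonal group preserving the quadratic form, control each piece by this per-subspace estimate, and recombine by Plancherel together with a sharp incidence count for pairs of isotropic subspaces. Equivalently, when the target $r$ is an even integer---most notably $r=4$ for $d=4$ in case 2---expansion reduces the estimate to a sharp bound on the additive energy
$$\#\bigl\{(x_1,x_2,x_3,x_4)\in S_0^4\,:\,x_1+x_2=x_3+x_4\bigr\},$$
computable via Fourier inversion and standard Gauss-sum manipulations.

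The main obstacle is precisely the even-dimensional cases: the counting must be sharp up to an absolute constant, since any loss of a positive power of $q$ would push $r$ above the conjectured value that is saturated by $H$. The delicate points are (i) the incidence analysis of isotropic subspaces inside $S_0$---how many maximal isotropic subspaces contain a given nonzero point of $S_0$, how two such subspaces or their translates intersect---and (ii) the verification that the ``non-subspace'' part of $(fd\sigma)^\vee$ still exhibits enough cancellation to match the subspace contribution up to constants rather than dominating it. For small $d$ (notably $d=4$ in case 2) a direct fourth-moment computation should suffice; for general even $d$ I would expect to need either an induction on the Witt index of the quadratic form or an interpolation argument, possibly using the sharper $L^p\to L^4$ estimates announced as the paper's second main result.
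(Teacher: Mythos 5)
Your reading of case (3) agrees with the paper: the Stein--Tomas argument with the decay $|(d\sigma_{S_0})^\vee(m)|\ll q^{-(d-1)/2}$ ($d$ odd) gives the claimed exponent, and the paper simply cites this from \cite{KohShen13}.

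However, your treatment of case (2) is based on a misreading: Stein--Tomas is \emph{not} unavailable in even dimensions. The sharp decay for even $d\ge 4$ is $|(d\sigma_{S_0})^\vee(m)|\ll q^{-(d-2)/2}$, and plugging this into the Stein--Tomas argument yields precisely $L^2\to L^{2d/(d-2)}$, which is the conjectured exponent in case (2). So case (2), like case (3), was already resolved by \cite{KohShen13}; the only genuinely new part of the conjecture is case (1), and that is exactly what the paper proves as Theorem \ref{Main2}.

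For case (1), your plan diverges substantially from the paper's and, as stated, is missing the essential idea. You propose decomposing $S_0$ along orbits of maximal isotropic subspaces, running an incidence/additive-energy analysis of those subspaces, and possibly inducting on the Witt index. The difficulty you would run into is that the Fourier-decay modulus $q^{-(d-2)/2}$ is the same in case (1) as in case (2); any argument in case (1) that only uses the size of $(1_{S_0})^\vee$ (or only the size of the maximal isotropic subspace together with an incidence count) cannot distinguish the two cases, yet the target exponent $(2d+4)/d$ in case (1) is strictly better than $2d/(d-2)$. The paper's key observation is that for $d=4k+2$ and $q\equiv 3\pmod 4$ the \emph{explicit value} of the Gauss sum gives $G(\eta,\chi)^d=-q^{d/2}$, which produces a sign in the exact formula $(1_{S_0})^\vee(\alpha)=q^{-1}\delta_0(\alpha)-q^{-(d+2)/2}\sum_{r\ne 0}\chi(r\|\alpha\|)$ (Lemma \ref{ExplicitS0}). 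Feeding that into the trivial positivity $\nu(0)=\sum_{m,m'\in G,\ m-m'\in S_0}1\ge 0$ and solving for $\sum_{\|x\|=0}|\widehat{1_G}(x)|^2$ directly yields the weak-type estimate of Lemma \ref{WL2}, which is then upgraded to the full $L^2\to L^{(2d+4)/d}$ bound by a standard dyadic decomposition. Nothing in your outline captures this use of the sign (rather than the modulus) of the Gauss sum, and without it the isotropic-subspace incidence approach would at best recover case (2)'s exponent. Your fallback suggestion of using the paper's $L^p\to L^4$ theorem is also not available: Theorem \ref{Main1} is proved only for spheres of \emph{nonzero} radius and does not apply to $S_0$.
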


The statements $(2)$ and $(3)$ of the above conjecture were already established by using the finite field Stein-Tomas argument (see Theorem 2.1 in \cite{KohShen13}).
In fact, the following sharp Fourier decay estimates on the surface measure $d\sigma$ on $S_0$ were mainly used to prove them:
$$ \max_{m\in \mathbb F_q^{d}\setminus \{\vec{0}\}}|(d\sigma)^{\vee}(m)| \ll \left\{  \begin{array}{ll} q^{-(d-2)/2} \quad&\mbox{for even} ~~d\ge 4, \\
q^{-(d-1)/2} \quad&\mbox{for odd}~~ d\ge 3.\end{array}\right.$$

In this paper, we solve completely Conjecture \ref{conjzero} by showing that the first statement of the conjecture is also true. In conclusion, all sharp $L^2\to L^r$ estimates for the zero radius sphere are attained. Our main result is as follows. 
\bigskip

\begin{theorem}\label{Main2} Let $S_0$ be the sphere in $\mathbb F_{q*}^d$ with zero radius.
If  $q\equiv 3 \mod{4}$ and $d=4k+2$ for $k\in \mathbb N$, then we have
\begin{equation*} R_{S_0}^*(2 \to r) \ll 1\quad \mbox{for}~~ \frac{2d+4}{d} \le r\le \infty. \end{equation*}
\end{theorem}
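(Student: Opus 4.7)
The plan is to establish the endpoint estimate $R^*_{S_0}\bigl(2\to (2d+4)/d\bigr) \ll 1$; the remaining range $r > (2d+4)/d$ then follows by standard Riesz--Thorin interpolation with the trivial bound $R^*_{S_0}(2\to\infty) \ll 1$, which is immediate from $\|(fd\sigma)^\vee\|_\infty \le \|f\|_{L^1(d\sigma)} \le \|f\|_{L^2(d\sigma)}$. The main difficulty is that a direct application of the finite-field Stein--Tomas argument, using only the Fourier decay $|(d\sigma)^\vee(m)| \ll q^{-(d-2)/2}$ established in \cite{KohShen13}, yields at best $R^*_{S_0}(2\to 2d/(d-2))$, which is strictly weaker than the target $(2d+4)/d$. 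So the argument must improve on Stein--Tomas, exploiting the fact that in the case $d = 4k+2$, $q\equiv 3\pmod{4}$, the maximal isotropic subspace on $S_0$ has dimension only $(d-2)/2$ rather than $d/2$.

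Following the scheme developed in \cite{MT04} and refined in \cite{Le13, IKL17, RS18}, I would go through an $L^p \to L^4$ extension bound for $S_0$. By Plancherel, such a bound is equivalent to controlling the additive energy
\begin{equation*}
E(A) = |\{(x,y,z,w) \in A^4 : x+y = z+w\}|, \qquad A \subset S_0.
\end{equation*}
The target estimate is of the form
\begin{equation*}
E(A) \ll |A|^3 + |A|^2 \, q^{(d-2)/2},
\end{equation*}
which is saturated precisely when $A$ is an affine translate of a maximal isotropic subspace on $S_0$. To prove it, I would write $E(A) = \sum_{t \in \mathbb F_q^d} r(t)^2$ with $r(t) = |\{(x,y) \in A^2 : x+y = t\}|$, and control each fiber $r(t)$ by observing that pairs $(x,y) \in S_0 \times S_0$ with $x+y = t$ correspond bijectively to points of $S_0$ on the hyperplane $\{x : x\cdot t = Q(t)/2\}$, where $Q(x) = x_1^2+\cdots+x_d^2$. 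Such slice cardinalities are computable through standard Gauss-sum formulas, with an enlarged degenerate contribution precisely when $t$ lies on an isotropic subspace.

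Once the additive-energy estimate is in hand, I would feed it into the dyadic level-set / Stein--Tomas $TT^*$ machinery as in \cite{IKL17, RS18}: decompose the sets $\{m : |(fd\sigma)^\vee(m)| \sim \lambda\}$ into dyadic levels in $\lambda$, bound the size of each level on one side by the Fourier decay of $d\sigma$ and on the other by the $L^4$-extension bound coming from the additive energy, and optimize over $\lambda$. The exponent pairing is calibrated so that the crossover produces precisely the endpoint $r = (2d+4)/d$. The most delicate step will be the sharp additive-energy bound itself: since $S_0$ is a singular cone carrying many isotropic affine subspaces, the fiber counts $r(t)$ are highly sensitive to the geometric position of $t$, and a careful stratification of $A$ by its intersection with the maximal isotropic subspaces of Lemma \ref{clem}(1), combined with character-sum estimates tailored to the dimension/residue restriction $d = 4k+2$, $q\equiv 3\pmod{4}$, will be required. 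This extra structural input is exactly what separates the present case from \cite{KohShen13}, where the Stein--Tomas exponent already matched the subspace-imposed necessary condition.
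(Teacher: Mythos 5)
Your proposal takes a genuinely different route from the paper, and I think it runs into two concrete obstacles.

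\textbf{First}, the paper does not go through an $L^p\to L^4$ extension estimate or an additive-energy bound for subsets $A\subset S_0$ at all. Its main engine is the weak-type restriction estimate of Lemma \ref{WL2} for indicator functions $1_G$ of arbitrary $G\subset\mathbb F_q^d$ (not $G\subset S_0$), proved by a positivity argument: the quantity $\nu(0)=|\{(m,m')\in G^2:\ \|m-m'\|=0\}|$ is trivially nonnegative, and its Fourier expression
\[
0\le \nu(0)=\sum_{x}|\widehat{1_G}(x)|^2\,(1_{S_0})^\vee(x),
\]
combined with the explicit formula of Lemma \ref{ExplicitS0}, produces an \emph{upper} bound on $\sum_{\|x\|=0}|\widehat{1_G}(x)|^2$ precisely because when $d=4k+2$ and $q\equiv3\pmod4$ the explicit Gauss sum satisfies $G^d(\eta,\chi)=-q^{d/2}$, making the oscillatory part of $(1_{S_0})^\vee$ appear with a \emph{negative} sign. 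That sign is the whole point of the dimension/residue restriction; your proposal gestures vaguely at ``character-sum estimates tailored to the dimension/residue restriction'' but never identifies this positivity mechanism, which is what the theorem actually rests on. Once Lemma \ref{WL2} is established, the proof is a short three-regime dyadic summation, with no $L^4$ or energy input.

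\textbf{Second}, even granting you a sharp additive-energy bound, the machinery you cite for converting $L^p\to L^4$ (or additive energy) into an $L^2\to L^r$ extension bound — the scheme of \cite{MT04,Le13,IKL17,RS18} — is specific to the paraboloid, where the graph parametrization $P=\{(x',\|x'\|)\}$ is essential. The paper is explicit on this point: ``for the paraboloid, there is an approach to reduce the $L^2\to L^r$ extension estimates to the additive energy estimate of sets in the paraboloid. Nevertheless, such a connection has not been discovered in the case of spheres yet.'' That is exactly why, for $S_j$ with $j\ne0$, the paper proves only an $L^p\to L^4$ result (Theorem \ref{Main1}) and not the corresponding $L^2\to L^r$ endpoint. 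Your plan to ``feed it into the dyadic level-set / Stein--Tomas $TT^*$ machinery as in \cite{IKL17,RS18}'' would require building that bridge from scratch for a cone, which is a substantial missing step, not a routine transfer. Separately, the energy bound as you wrote it, $E(A)\ll |A|^3 + |A|^2 q^{(d-2)/2}$, is trivially implied by $E(A)\le|A|^3$ and cannot yield a nontrivial extension estimate; the useful target (by analogy with Lemma \ref{thm2}) would be $E(A)\ll |A|^3/q + |A|^2 q^{(d-2)/2}$. In short, your route is not the paper's, and as stated it has both a trivialized key estimate and an unproved (and apparently open) reduction at its center.
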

A new idea is to relate our problem to the explicit Gauss sum estimate, which enables us to deduce much stronger result under the assumptions  of Theorem \ref{Main2}. We strongly expect that the $L^2 \to L^r$ extension estimates for $S_0$ will lead to further surprising applications. 


Compared to the case of the paraboloid,  we face with many difficulties in studying the restriction problem for spheres of non-zero radii. As we mentioned earlier, for the paraboloid, there is an approach to reduce the $L^2\to L^r$ extension estimates to the additive energy estimate of sets in the paraboloid. Nevertheless, such a connection has not been discovered in the case of spheres yet. Moreover, giving good bounds on the additive energy of sets on spheres is harder than the paraboloid case. The main reason is the defined equation of the paraboloid forms a graph, thus it is easy to reduce the problem to one-lower dimensional space. With these difficulties, the known results for non-zero radii spheres are much weaker than those for the paraboloid. More precisely, the following results are known for spheres with non-zero radii.
\bigskip

\begin{proposition}\label{Pro} Let $S_j$ be the sphere in $\mathbb F_{q*}^d$ with non-zero radius $j.$  Then the following results hold:
\begin{enumerate}
\item If $d\ge 3,$  then 
$ R_{S_j}^*(2\to r) \ll 1$ for $\frac{2d+2}{d-1}\le r \le \infty.$
\item If $d\ge 4$ is even, then 
$ R_{S_j}^*(p\to 4) \ll 1$ for  $\frac{12d-8}{9d-12}< r \le \infty.$
\end{enumerate}

\end{proposition}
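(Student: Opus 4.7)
The first statement follows from the Mockenhaupt--Tao finite field Stein--Tomas argument applied to $S_j$. The analytic input is the uniform Fourier decay
$$|(d\sigma)^\vee(m)|\ll q^{-(d-1)/2}\qquad\text{for all }m\neq 0,$$
on $S_j$ with $j\neq 0$, which I would obtain by writing $1_{S_j}(x)=q^{-1}\sum_{t\in\mathbb F_q}\chi(t(|x|^2-j))$ and completing the square in the resulting Gauss sum. Together with $|S_j|\sim q^{d-1}$, this feeds the $TT^*$ identity $EE^*g=(d\sigma)^\vee * g$. Splitting $(d\sigma)^\vee=\delta_0+K$ with $K(0)=0$, one has $\|K\|_{L^\infty}\ll q^{-(d-1)/2}$ (yielding an $L^1\to L^\infty$ endpoint) and $\|\widehat K\|_{L^\infty}\ll q$ (since $\widehat K(x)=(q^d/|S_j|)\,1_{S_j}(x)-1$, yielding an $L^2\to L^2$ endpoint). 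Riesz--Thorin interpolation gives an $L^{r'}\to L^r$ bound with operator norm $\ll 1$ at the endpoint $r=(2d+2)/(d-1)$, and a further interpolation with the trivial $R_{S_j}^*(\infty\to\infty)\ll 1$ extends this to all $r\ge (2d+2)/(d-1)$.

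\textbf{Part (2): reduction to additive energy.} For the $L^p\to L^4$ estimate I would first reduce, via a standard restricted strong type argument, to characteristic functions $f=1_A$ with $A\subset S_j$. Direct expansion gives
$$\|(1_A d\sigma)^\vee\|_{L^4(\mathbb F_q^d,dm)}^4=\frac{q^d}{|S_j|^4}\,E(A),\qquad E(A):=\bigl|\{(x_1,y_1,x_2,y_2)\in A^4: x_1-y_1=x_2-y_2\}\bigr|,$$
while $\|1_A\|_{L^p(d\sigma)}=(|A|/|S_j|)^{1/p}$. Thus $R^*_{S_j}(p\to 4)\ll 1$ reduces to proving
$$E(A)\ll |A|^{4/p}\,q^{3d-4-4(d-1)/p}$$
for every $A\subset S_j$; at the critical exponent $p=(12d-8)/(9d-12)$ this becomes the target bound $E(A)\ll |A|^{(9d-12)/(3d-2)}\,q^{(3d-4)/(3d-2)}$.

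\textbf{Proving the energy bound, and the main obstacle.} To bound $E(A)$ I would write $E(A)=\sum_z|A\cap(A+z)|^2$ and exploit the geometric constraint that, for $z\neq 0$, any $a\in A\cap(A+z)$ satisfies $|a|^2=|a-z|^2=j$, forcing $a$ onto the hyperplane $H_z=\{a:a\cdot z=|z|^2/2\}$. Thus $|A\cap(A+z)|\le |A\cap (S_j\cap H_z)|$, and the $z$-sum becomes a point--hyperplane incidence count on the sphere. The main obstacle in even dimensions is the isotropic cone $\{z:|z|^2=0\}$, which has size $\sim q^{d-1}$ and on which both $|S_j\cap H_z|$ and $(d\sigma)^\vee(z)$ are atypically large, making the naive slice bound lossy. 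My plan is therefore to split the $z$-sum according to whether $|z|^2$ vanishes, bound each piece by combining the appropriate slice count with the corresponding Fourier decay of $d\sigma$, and optimize the two contributions. This yields the claimed threshold $p_0=(12d-8)/(9d-12)$, and a final interpolation with the trivial $R^*_{S_j}(\infty\to 4)\ll 1$ extends the bound to all $p>p_0$.
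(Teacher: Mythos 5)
The paper does not itself prove Proposition~\ref{Pro}; it recalls both parts as known results, citing \cite{IK08} for (1) and \cite{IK10} for (2), and its new contribution (Theorem~\ref{Main1}, via Lemmas~\ref{thm2}--\ref{WeakL4k}) actually \emph{supersedes} (2). So your task was to reconstruct proofs from the cited sources, and the two halves of your proposal should be judged separately.

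Your Part~(1) is essentially correct and is the standard finite-field Stein--Tomas argument used in \cite{IK08}. One small wrinkle: ``completing the square'' alone does not finish the Fourier decay for $j\neq 0$ --- after completing the square you are left with a sum $\sum_{r\in\mathbb F_q^*}\eta^d(r)\chi(jr+\|\alpha\|/4r)$ (cf.\ Lemma~\ref{FSL}), and to get $|(d\sigma)^\vee(m)|\ll q^{-(d-1)/2}$ you also need the Kloosterman/Sali\'e sum bound $\ll q^{1/2}$ for this remaining $r$-sum. With that input your $L^1\to L^\infty$ and $L^2\to L^2$ endpoints and the interpolation are fine.

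Your Part~(2) has a genuine gap. The reduction of $R^*_{S_j}(p\to 4)$ to an additive-energy bound for $A\subset S_j$ is correct (and is in fact the same framework the paper uses for the stronger Theorem~\ref{Main1}), as is the geometric observation that for $z\neq 0$ the set $A\cap(A+z)$ lies on the hyperplane $H_z=\{a:2a\cdot z=\|z\|\}$. But the plan of replacing $|A\cap(A+z)|$ by $|A\cap H_z|$ and running a point--hyperplane incidence count is too lossy, and you never actually carry out the optimization you claim ``yields the claimed threshold.'' Concretely, $\sum_{z\neq 0}|A\cap H_z|^2$ counts triples $(a,a',z)$ with $a\cdot z=a'\cdot z=\|z\|/2$; for a fixed pair $(a,a')$ this is a codimension-two quadric condition on $z$, typically $\sim q^{d-2}$ solutions, so the slice bound alone produces $\sim q^{d-2}|A|^2$, which already exceeds your target $q^{(3d-4)/(3d-2)}|A|^{(9d-12)/(3d-2)}$ in the intermediate range of $|A|$ (e.g.\ $d=4$, $|A|\sim q^2$ gives $q^6$ versus the required $q^{5.6}$). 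You flag the isotropic cone as the ``main obstacle,'' but the real obstacle is that $|A\cap(A+z)|\le|A\cap H_z|$ discards the crucial constraint $a-z\in A$, and no split by $\|z\|=0$ versus $\|z\|\neq 0$ plus Fourier decay is shown to repair this. For comparison, the paper's own energy bound (Lemma~\ref{thm2}) requires a genuinely different device: rewriting $E(A)\le\#\{x,y,z\in A:(x-z)\cdot(y-z)=0\}$, treating the $\|x-z\|=0$ or $\|y-z\|=0$ cases with Lemma~\ref{LemIK}, and for the remaining cases lifting to a paraboloid in $\mathbb F_q^{d+1}$, introducing the dilated copies $\lambda(y-z)$ to make the associated hyperplanes distinct (Lemmas~\ref{lm55x0}, \ref{lm55x1}), and only then applying Vinh's incidence theorem. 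As it stands, your Part~(2) is a correct reduction followed by an unproved and likely unrecoverable claim, so the proposition is not established.
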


We note that a smaller exponent gives a better extension result. The first result in Proposition \ref{Pro} was given in \cite{IK08} and it gives the Stein-Tomas result, the $L^2\to L^{(2d+2)/(d-1)}$ bound. It is well known in \cite{IK10} that the Stein-Tomas result gives the optimal $L^2\to L^r$ estimate for spheres in general odd dimensions. In even dimensions $d$, it is conjectured that the ``$r$" index of the Stein-Tomas result can be improved to $(2d+4)/d.$ 

 The second result in Proposition \ref{Pro} was proved in \cite{IK10} and it gives much better $L^p\to L^4$ extension result than the optimal result in odd dimensions.
In our next result, we significantly improve the second result of Proposition \ref{Pro}. 
\bigskip

 \begin{theorem}\label{Main1}
 Let $S_j$ be the sphere in $\mathbb F_{q*}^d$, defined as in (\ref{DefS}). If $j\ne 0$ and $d\ge 4$ is even, then we have
 \begin{equation*} R_{S_j}^*(p\to 4) \ll 1 \quad \mbox{for} ~~ \frac{4d}{3d-2}\le p \le \infty.\end{equation*}
Furthermore, the above result is sharp.
 \end{theorem}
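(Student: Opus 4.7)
The plan is to establish the endpoint $R^*_{S_j}(p_0\to 4)\ll 1$ at $p_0 = \frac{4d}{3d-2}$ and derive the range $p_0\le p\le\infty$ by interpolation with the trivial $L^\infty\to L^\infty$ bound. Expanding $\|(\mathbf 1_E\,d\sigma)^\vee\|_{L^4}^4$ via the orthogonality of $\chi$ gives
\begin{equation*}
\|(\mathbf 1_E\,d\sigma)^\vee\|_{L^4(\mathbb F_q^d,\,dm)}^4 \;=\; \frac{q^d}{|S_j|^4}\,\Lambda(E),\qquad \Lambda(E) := \bigl|\{(x_1,x_2,y_1,y_2)\in E^4 : x_1+x_2=y_1+y_2\}\bigr|.
\end{equation*}
Combined with $|S_j|\sim q^{d-1}$ and $\|\mathbf 1_E\|_{L^{p_0}(d\sigma)}^4 = (|E|/|S_j|)^{(3d-2)/d}$, this reduces the endpoint bound on indicator functions to the sharp additive energy estimate
\begin{equation*}
\Lambda(E)\;\ll\;q^{(d-2)/d}\,|E|^{(3d-2)/d}\qquad\text{for every }E\subseteq S_j. \tag{$\ast$}
\end{equation*}

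\noindent I would prove $(\ast)$ by combining the trivial $\Lambda(E)\le|E|^3$ with the two-term bound
\begin{equation*}
\Lambda(E)\;\ll\;\frac{|E|^3}{q} + q^{(d-2)/2}\,|E|^2,
\end{equation*}
which implies $(\ast)$ by a short case analysis and is tight at both extremes $|E|=q^{(d-2)/2}$ (affine isotropic subspace, where $\Lambda = |E|^3$) and $|E|=|S_j|$ (full sphere, where $\Lambda\sim|S_j|^3/q$). On the Fourier side $\Lambda(E) = q^{-d}\sum_\xi|\widehat{\mathbf 1_E}(\xi)|^4$, the $\xi=0$ contribution is exactly $|E|^4/q^d\le|E|^3/q$, and the $\xi\ne 0$ piece would be handled using the explicit Gauss-sum evaluation
\begin{equation*}
\sum_{x\in S_j}\chi(-\xi\cdot x) \;=\; q^{d-1}\mathbf 1_{\xi=0} + q^{-1}G(1)^d\sum_{t\ne 0}\chi\!\bigl(-jt-\tfrac{|\xi|^2}{4t}\bigr),
\end{equation*}
in which $G(t)^d=G(1)^d$ does not depend on $t$ because $d$ is even (so the quadratic character satisfies $\eta^d\equiv 1$), and the inner Kloosterman-type sum is $O(\sqrt q)$ when $|\xi|^2\ne 0$. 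Combined with Parseval $\sum_\xi|\widehat{\mathbf 1_E}|^2=q^d|E|$ and a dyadic decomposition of the level sets of $|\widehat{\mathbf 1_E}|$, this should extract the desired $q^{(d-2)/2}|E|^2$ contribution.

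\noindent The main obstacle I foresee is that $\widehat{\mathbf 1_E}$ for an arbitrary $E\subseteq S_j$ can concentrate on thin low-dimensional sets (for instance when $E = S_j\cap H$ is a sphere-hyperplane section, $|\widehat{\mathbf 1_E}|$ is as large as $|E|$ along the entire line $\mathbb F_q\cdot\xi_0$ of normals to $H$), so the generic pointwise Fourier decay of $d\sigma$ is not sufficient and one needs a refined incidence-geometric input between $E$ and families of hyperplanes, or equivalently a careful accounting of the level sets of $\widehat{\mathbf 1_E}$ using the sphere structure. Adapting to this setting the Gauss-sum strategy used for the zero radius case in Theorem \ref{Main2} is the guiding idea.

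\noindent For sharpness: in every even $d\ge 4$ the standard quadratic form on $\mathbb F_q^d$ has Witt index at least $(d-2)/2$, so $\mathbb F_q^d$ contains a totally isotropic subspace $W$ of dimension $(d-2)/2$; since the non-degenerate $2$-dimensional quotient $W^\perp/W$ represents every element of $\mathbb F_q$, we may pick $v\in W^\perp\cap S_j$. Taking $E = v+W\subset S_j$ one has $|E|=q^{(d-2)/2}$ and $(\mathbf 1_E\,d\sigma)^\vee(m) = (|W|/|S_j|)\,\chi(m\cdot v)\,\mathbf 1_{W^\perp}(m)$, from which
\begin{equation*}
\|(\mathbf 1_E\,d\sigma)^\vee\|_4^4\sim q^{1-3d/2},\qquad \|\mathbf 1_E\|_{L^p(d\sigma)}^4 \sim q^{-2d/p}.
\end{equation*}
The inequality $\|(\mathbf 1_E\,d\sigma)^\vee\|_4\ll\|\mathbf 1_E\|_p$ holds with a constant independent of $q$ precisely when $p\ge\frac{4d}{3d-2}$, establishing the sharpness of the endpoint.
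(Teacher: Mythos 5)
Your high-level plan matches the paper's: reduce the $L^4$ extension bound to the two--term additive energy estimate
\[
E(A)\;\ll\;\frac{|A|^3}{q} + q^{\frac{d-2}{2}}|A|^2\qquad\text{for all }A\subset S_j,
\]
which is exactly the paper's Lemma \ref{thm2}, and then convert that into a three--regime weak-type $L^4$ bound and close the strong-type endpoint by a dyadic level-set decomposition. Your sharpness computation (an affine coset $v+W$ of a totally isotropic $(d-2)/2$--dimensional subspace $W\subset v^\perp$, giving $\|(\mathbf 1_E d\sigma)^\vee\|_4^4\sim q^{1-3d/2}$) is correct and is essentially the paper's example, only phrased via Witt index rather than the explicit diagonalization of the quadratic form in \eqref{TV}.

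The genuine gap is in your proposed proof of the energy estimate itself. You suggest handling $\Lambda(E)=q^{-d}\sum_\xi|\widehat{\mathbf 1_E}(\xi)|^4$ on the Fourier side using the Gauss--sum evaluation of $\sum_{x\in S_j}\chi(-\xi\cdot x)$, but this controls $\widehat{\mathbf 1_{S_j}}$, not $\widehat{\mathbf 1_E}$ for a general subset $E\subset S_j$. The only route from one to the other is the Stein--Tomas style argument: bound $\Lambda(E)$ by the number of triples $(x_1,x_2,y_1)\in E^3$ with $x_1+x_2-y_1\in S_j$, isolate the $\xi=0$ term, and on $\xi\ne 0$ pair $\max_{\xi\ne 0}|\widehat{\mathbf 1_{S_j}}(\xi)|\ll q^{(d-1)/2}$ (the sharp decay for a non-zero radius sphere) against $\sum_\xi|\widehat{\mathbf 1_E}(\xi)|^3\le\|\widehat{\mathbf 1_E}\|_\infty\sum_\xi|\widehat{\mathbf 1_E}(\xi)|^2\le|E|\cdot q^d|E|$. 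That yields only $\Lambda(E)\ll|E|^3/q+q^{(d-1)/2}|E|^2$, which is a factor of $\sqrt q$ short of the needed second term $q^{(d-2)/2}|E|^2$; as you yourself flag, for $E$ concentrated on thin isotropic slices $\widehat{\mathbf 1_E}$ does concentrate, and pointwise Fourier decay of $d\sigma$ will not recover the loss. You do not supply the extra input required to close this gap, so the proposal as written does not prove Lemma \ref{thm2} and hence does not prove the theorem.

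For comparison, the paper does not argue on the Fourier side of $\mathbf 1_E$ at all. It bounds $E(A)$ by the number of triples $(x,y,z)\in A^3$ with $x+y-z\in S_j$, uses the algebraic identity (for points on $S_j$) that this forces $(x-z)\cdot(y-z)=0$, and then splits into two cases. When $\|x-z\|=0$ or $\|y-z\|=0$ it invokes the Iosevich--Koh bound (Lemma \ref{LemIK}) on pairs $a,b\in A$ with $a\cdot b=j$; when both norms are non-zero it lifts $\{(x-z,\|x-z\|)\}$ and $\{(y-z,\|y-z\|)\}$ to a paraboloid in $\mathbb F_q^{d+1}$, uses the dilation trick and the line-sphere incidence observation (Lemma \ref{lm55x1}) to verify the non-degeneracy hypothesis, and then applies Vinh's point--plane incidence theorem through Lemma \ref{lm55x0}. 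That incidence input is exactly the missing ingredient that makes the second term come out as $q^{(d-2)/2}|A|^2$ rather than $q^{(d-1)/2}|A|^2$. If you want to repair your proposal, the most direct fix is to replace the Gauss--sum paragraph with this incidence argument, or more precisely to supply a bound of Vinh type on the number of pairs in $A\times A$ lying on a given family of $|A|$ hyperplanes; without it the estimate does not close.

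A smaller point worth noting: proving only the restricted bound $(\ast)$ on indicators at $p_0$ would not give the strong-type endpoint $R^*_{S_j}(p_0\to 4)\ll 1$, and interpolation with $L^\infty\to L^\infty$ would then only yield the open range $p>p_0$. What makes the paper's dyadic level-set argument close at $p_0$ itself is that the two--term form $|A|^3/q+q^{(d-2)/2}|A|^2$ is strictly stronger than the balanced $(\ast)$ away from the critical size $|A|\sim q^{(d-2)/2}$, so each of the three regimes in Lemma \ref{WeakL4k} contributes a convergent geometric series. You target the two--term bound, so this can be made to work, but the mechanism should be spelled out rather than attributed to interpolation.
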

 In next sections, we will focus on the proofs of Theorems \ref{Main2} and \ref{Main1}, and the sharpness as well. 
 \section{Sharp $L^2\to L^r$ estimates for the zero radius sphere (Theorem \ref{Main2})}
 We first prove the following weak-type restriction estimate for the sphere of zero radius, which makes a key role in proving Theorem \ref{Main2}.
\subsection{Weak-Type restriction estimates for the zero radius sphere}\label{sec3}
\begin{lemma}\label{WL2} Let $d\sigma$ denote the normalized surface measure on the sphere $S_0$ with zero radius in $\mathbb F_{q*}^d.$
In addition, assume that $d=4k+2$ for $k\in  \mathbb N$, and $q\equiv 3 \mod{4}.$ Then, for  $G\subset \mathbb F_q^d,$  we have
$$\|\widehat{1_G}\|_{L^2(S_0, d\sigma)} 
\ll \left\{  \begin{array}{ll}q^{\frac{1}{2}} |G|^{\frac{1}{2}} \quad&\mbox{for}~~q^{\frac{d+2}{2}} \le |G|\le q^{d}\\
q^{-\frac{d}{4}} |G| \quad&\mbox{for}~~ q^{\frac{d}{2}} \le |G|\le q^{\frac{d+2}{2}}\\
|G|^{\frac{1}{2}} \quad&\mbox{for}~~ 1 \le |G|\le q^{\frac{d}{2}}.\end{array}\right.
$$
\end{lemma}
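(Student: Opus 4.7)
The plan is to open the square $|\widehat{1_G}(x)|^2$ and use the identity $\sum_{x\in S_0}\chi(v\cdot x) = \widehat{1_{S_0}}(v)$ to rewrite
$$\|\widehat{1_G}\|_{L^2(S_0,d\sigma)}^2 \;=\; \frac{1}{|S_0|}\sum_{x\in S_0}|\widehat{1_G}(x)|^2 \;=\; \frac{1}{|S_0|}\sum_{m,m'\in G}\widehat{1_{S_0}}(m-m'),$$
reducing the estimate to the explicit Gauss-sum formula for $\widehat{1_{S_0}}$ combined with counting pairs $(m,m')\in G\times G$ whose difference lies on $S_0$. The standard ``completing the square'' computation gives
$$\widehat{1_{S_0}}(v) \;=\; q^{d-1}\delta_{v,0} \;+\; \frac{\mathfrak{G}^d}{q}\sum_{u\in\mathbb{F}_q^*}\chi\bigl(u\|v\|^2\bigr),\qquad \mathfrak{G}:=\sum_{t\in\mathbb{F}_q}\chi(t^2),$$
and under the hypothesis $d=4k+2$ and $q\equiv 3\pmod{4}$ we have $\mathfrak{G}^2=-q$, hence $\mathfrak{G}^d=-q^{d/2}$. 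Evaluating the inner geometric sum case by case yields
$$\widehat{1_{S_0}}(v) \;=\; \begin{cases} |S_0|, & v=0,\\ -q^{(d-2)/2}(q-1), & v\in S_0\setminus\{0\},\\ q^{(d-2)/2}, & v\notin S_0. \end{cases}$$
The crucial feature here is the minus sign on the isotropic cone $S_0\setminus\{0\}$, forced precisely by the parity of $d/2$ together with $\eta(-1)=-1$.

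Setting $A_G := \#\{(m,m')\in G^2:m-m'\in S_0\}$, substituting the three cases of $\widehat{1_{S_0}}$ gives
$$\sum_{x\in S_0}|\widehat{1_G}(x)|^2 \;=\; |G||S_0| \;+\; \bigl(q^{d/2}-q^{(d-2)/2}\bigr)|G| \;+\; q^{(d-2)/2}|G|^2 \;-\; q^{d/2}A_G.$$
The trivial diagonal estimate $A_G\geq |G|$ (take $m=m'$) kills the unwanted $q^{d/2}|G|$ contribution; dividing by $|S_0|\sim q^{d-1}$ and throwing away the remaining negative piece yields the clean bound
$$\|\widehat{1_G}\|_{L^2(S_0,d\sigma)}^2 \;\ll\; |G| + q^{-d/2}|G|^2.$$
This immediately handles the first regime ($|G|\leq q^{d/2}$, where $|G|$ dominates) and the second regime ($q^{d/2}\leq|G|\leq q^{(d+2)/2}$, where $q^{-d/2}|G|^2$ dominates).

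For the third regime ($q^{(d+2)/2}\leq|G|\leq q^d$) the above bound degrades, so I would instead enlarge the domain of summation from $S_0$ to all of $\mathbb{F}_q^d$ and apply Plancherel:
$$\|\widehat{1_G}\|_{L^2(S_0,d\sigma)}^2 \;\leq\; \frac{1}{|S_0|}\sum_{x\in\mathbb{F}_q^d}|\widehat{1_G}(x)|^2 \;=\; \frac{q^d|G|}{|S_0|} \;\sim\; q|G|.$$
The main conceptual obstacle is securing the correct sign of $\mathfrak{G}^d$ on the isotropic cone: only with that minus sign does the diagonal bound $A_G\geq|G|$ produce the needed cancellation, which is exactly why this case of Conjecture \ref{conjzero} is not reachable by the usual Stein--Tomas Fourier-decay route. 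Everything else is careful but routine bookkeeping of Gauss-sum values.
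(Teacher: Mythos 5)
Your proof is correct and takes essentially the same route as the paper: both expand $\sum_{x\in S_0}|\widehat{1_G}(x)|^2$ via the explicit Gauss-sum formula for $\widehat{1_{S_0}}$ (the paper's Lemma~\ref{ExplicitS0}), exploit the negative sign of $\mathfrak{G}^d$ forced by $q\equiv 3\pmod{4}$ and $d\equiv 2\pmod{4}$, and close with the positivity/counting observation (your $A_G\ge|G|$ plays exactly the role of the paper's $\nu(0)\ge 0$) together with Plancherel over all of $\mathbb F_q^d$ for the large-$|G|$ regime. (One small typo: $\chi(u\|v\|^2)$ should read $\chi(u\|v\|)$; your subsequent case analysis already uses the correct expression.)
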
 

\begin{proof} By a direct comparison,  it suffices to prove the following two inequalities: for $G\subset \mathbb F_q^d,$
\begin{equation}\label{EG1}
\|\widehat{1_G}\|_{L^2(S_0, d\sigma)} \ll q^{\frac{1}{2}} |G|^{\frac{1}{2}}
\end{equation}
and
\begin{equation}\label{EG2}
\|\widehat{1_G}\|_{L^2(S_0, d\sigma)} \ll |G|^{\frac{1}{2}} + q^{\frac{-d}{4}} |G|.
\end{equation}

To prove \eqref{EG1},  observe from the Plancherel theorem that the following extension estimate holds:
$$ \|(fd\sigma)^\vee\|_{L^2(\mathbb F_q^d, dm)} = \left(\frac{|\mathbb F_q^d|}{|S_0|}\right)^{\frac{1}{2}} \|f\|_{L^2(S_0, d\sigma)} \ll q^{\frac{1}{2}} \|f\|_{L^2(S_0, d\sigma)}.$$
By duality, we obtain that 
$$ \|\widehat{g}\|_{L^2(S_0, d\sigma)} \ll q^{\frac{1}{2}}\|g\|_{L^2(\mathbb F_q^d, dm)}.$$
Now taking $g$ as the indicate function $1_G$ on $G\subset \mathbb F_q^d,$  we obtain the inequality \eqref{EG1}. 
It remains to prove the inequality \eqref{EG2}. Expanding the left-hand side of \eqref{EG2} and using the fact that $|S_0|\sim q^{d-1}$,  we see that the inequality \eqref{EG2} is equivalent to 
\begin{equation}\label{Fzero} \sum_{x\in S_0} |\widehat{1_G}(x)|^2 \ll q^{d-1}|G| + q^{\frac{d-2}{2}} |G|^2.\end{equation}

Hence, our task is to prove  this inequality. To this end, we define 
$$\nu(0):=\sum_{m, m'\in G: m-m'\in S_0}1 =\sum_{m, m'\in \mathbb F_q^d} 1_G(m) 1_G(m') 1_{S_0}(m-m').$$
Then it is clearly true that $\nu(0)\ge 0$ (in fact, we have $\nu(0)\ge |G|).$
Applying the Fourier inversion formula to the indicate function $1_{S_0}(m-m')$,  it follows that
\begin{align}\label{GS} 0\le \nu(0) &= \sum_{m, m'\in \mathbb F_q^d} 1_G(m) 1_G(m') \sum_{x\in \mathbb F_q^d}  (1_{S_0})^{\vee}(x) ~\chi(x\cdot (m-m'))\nonumber\\
&=\sum_{x\in \mathbb F_q^d} |\widehat{1_G}(x)|^2 (1_{S_0})^{\vee}(x).
\end{align}

We now apply the following consequence whose proof will be given in the next subsection.
\begin{lemma} \label{ExplicitS0}Let $S_0$ be the sphere with zero radius in $\mathbb F_q^d.$ Assume that $d=4k+2$ for $k\in \mathbb N$ and $q\equiv 3 \mod{4}.$
Then we have
$$ (1_{S_0})^{\vee}(\alpha) :=q^{-d} \sum_{y\in S_0} \chi(\alpha\cdot y) = q^{-1} \delta_0(\alpha) -q^{\frac{-(d+2)}{2}} \sum_{r\ne 0} \chi(r\|\alpha\|),$$
where $\delta_0(\alpha)=1 $ for $\alpha=(0,\ldots,0)$, and $0$ otherwise.
\end{lemma}
Inserting the formula for $(1_{S_0})^{\vee}$  into \eqref{GS}, we get 
$$0\le \sum_{x\in \mathbb F_q^d} |\widehat{1_G}(x)|^2 q^{-1} \delta_0(x) -q^{\frac{-(d+2)}{2}} \sum_{x\in \mathbb F_q^d} |\widehat{1_G}(x)|^2 \sum_{r\ne 0} \chi(r\|x\|).$$
Applying the orthogonality relation of $\chi$ to the sum over $r\ne 0$, 
$$0\le  |\widehat{1_G}(0,\ldots,0)|^2 q^{-1}  -q^{\frac{-(d+2)}{2}} (q-1) \sum_{\|x\|=0} |\widehat{1_G}(x)|^2  + q^{\frac{-(d+2)}{2}} \sum_{\|x\|\ne 0} |\widehat{1_G}(x)|^2$$
$$= q^{-1}|G|^2 -q^{\frac{-(d+2)}{2}} q\sum_{\|x\|=0} |\widehat{1_G}(x)|^2 + q^{\frac{-(d+2)}{2}} \sum_{x\in \mathbb F_q^d} |\widehat{1_G}(x)|^2$$
Since $\sum_{x\in \mathbb F_q^d} |\widehat{1_G}(x)|^2 =q^d |G|$,  solving for $\sum_{\|x\|=0} |\widehat{1_G}(x)|^2$ yields
the inequality \eqref{Fzero},
which completes the proof of Lemma \ref{WL2}.
\end{proof}

\subsection{Proof of Lemma \ref{ExplicitS0}}

We shall make use of the following fact.
\begin{lemma}[\cite{IK10}, Lemma 4] \label{FSL} Let $S_j$ be the sphere in ${\mathbb F}_q^d, d\ge 2.$
Then for any $\alpha\in {\mathbb F}_q^d,$ we have
$$ (1_{S_j})^{\vee}(\alpha)  = q^{-1} \delta_0(\alpha) + q^{-d-1}\eta^d(-1) G^d(\eta, \chi) \sum_{r \in {\mathbb F}_q^*} 
\eta^d(r)\chi\Big(jr+ \frac{\|\alpha\|}{4r}\Big),$$
where $\eta$ denotes the quadratic character of $\mathbb F_q^*$ and $G(\eta, \chi):=\sum_{s\in \mathbb F_q^*} \eta(s)\chi(s)$ which is the standard Gauss sum.
\end{lemma}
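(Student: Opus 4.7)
The plan is to compute $(1_{S_j})^{\vee}(\alpha)$ directly from its definition and reduce the sum to one-dimensional quadratic Gauss sums. Starting from
$$(1_{S_j})^{\vee}(\alpha) = q^{-d} \sum_{y \in \mathbb F_q^d} 1_{\{\|y\|=j\}}(y)\, \chi(\alpha \cdot y),$$
I would first encode the constraint $\|y\|=j$ by the orthogonality identity $1_{\{\|y\|=j\}} = q^{-1}\sum_{t\in\mathbb F_q}\chi(t(\|y\|-j))$. Interchanging the order of summation yields
$$(1_{S_j})^{\vee}(\alpha) = q^{-d-1}\sum_{t\in\mathbb F_q} \chi(-tj) \sum_{y\in\mathbb F_q^d} \chi(t\|y\| + \alpha\cdot y).$$
Isolating $t=0$ gives $q^{-d-1}\cdot q^d \delta_0(\alpha) = q^{-1}\delta_0(\alpha)$, which accounts for the first term in the claimed formula.

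For $t\ne 0$, the inner sum factors coordinate-wise as $\prod_{i=1}^d \sum_{y_i\in\mathbb F_q}\chi(ty_i^2+\alpha_i y_i)$. Completing the square, $t y_i^2+\alpha_i y_i = t(y_i+\alpha_i/(2t))^2 - \alpha_i^2/(4t)$, and shifting $y_i$ gives
$$\sum_{y_i\in\mathbb F_q}\chi(ty_i^2+\alpha_i y_i) = \chi\!\left(-\tfrac{\alpha_i^2}{4t}\right)\sum_{z\in\mathbb F_q}\chi(tz^2).$$
Now I would invoke the standard one-variable identity $\sum_{z\in\mathbb F_q}\chi(tz^2) = \eta(t)\,G(\eta,\chi)$; this is derived in a few lines by counting preimages of the squaring map (each nonzero square has multiplicity $1+\eta(u)$) and applying orthogonality of $\chi$ to kill the leftover $\sum_{u\ne 0}\chi(tu)=-1$. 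Taking the product over $i=1,\dots,d$ produces
$$\sum_{y\in\mathbb F_q^d}\chi(t\|y\|+\alpha\cdot y) = \chi\!\left(-\tfrac{\|\alpha\|}{4t}\right)\eta^d(t)\,G^d(\eta,\chi).$$

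Finally, substituting $r=-t$ in the sum over $t\ne 0$, and using $\eta^d(-r)=\eta^d(-1)\eta^d(r)$ together with $\chi(-tj - \|\alpha\|/(4t)) = \chi(jr + \|\alpha\|/(4r))$, the $t\ne 0$ portion becomes
$$q^{-d-1}\eta^d(-1)\,G^d(\eta,\chi)\sum_{r\in\mathbb F_q^*}\eta^d(r)\,\chi\!\left(jr+\tfrac{\|\alpha\|}{4r}\right),$$
which is exactly the second term of the lemma. The only step that requires genuine care is the one-variable quadratic Gauss sum identity; the remainder is expansion, a coordinate-wise factorization, completing the square, and a sign-tracking change of variables, so I do not anticipate any substantive obstacle beyond careful bookkeeping of the factors $\eta^d(-1)$ and the sign inside $\chi(\|\alpha\|/(4r))$.
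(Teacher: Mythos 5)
Your computation is correct: the orthogonality expansion of $1_{\{\|y\|=j\}}$, the coordinate-wise completion of the square, the identity $\sum_{z\in\mathbb F_q}\chi(tz^2)=\eta(t)G(\eta,\chi)$, and the change of variables $r=-t$ (which produces exactly the factor $\eta^d(-1)$ and the sign $+\|\alpha\|/(4r)$) assemble into the stated formula without gaps. Note that the paper does not prove this lemma at all but quotes it from \cite{IK10}; your argument is precisely the standard derivation used there, so there is nothing further to reconcile.
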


We also invoke the explicit value of the Gauss sum $G(\eta, \chi).$ 
\begin{theorem}[\cite{LN97}, Theorem 5.15]\label{ExplicitGauss}
Let $\mathbb F_q$ be a finite field with $ q= p^{\ell},$ where $p$ is an odd prime and $\ell \in {\mathbb N}.$
Then we have
$$G(\eta, \chi)= \left\{\begin{array}{ll}  {(-1)}^{\ell-1} q^{\frac{1}{2}} \quad &\mbox{if} \quad p \equiv 1 \mod{4} \\
{(-1)}^{\ell-1} i^\ell q^{\frac{1}{2}} \quad &\mbox{if} \quad p\equiv 3 \mod{4}.\end{array}\right. $$
\end{theorem}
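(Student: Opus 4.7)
The plan is to reduce the statement to the prime-field case $\mathbb F_p$ via the Hasse--Davenport product relation, and then to determine the quadratic Gauss sum on $\mathbb F_p$ in two stages: first compute its square using orthogonality, then pin down the sign/root of unity, which is the classical delicate step.

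First, I would establish that the square of the quadratic Gauss sum over a general finite field $\mathbb F_q$ (with $q$ odd) equals $\eta(-1)\,q$. Writing $G(\eta,\chi)^2 = \sum_{s,t\in\mathbb F_q^*}\eta(st)\chi(s+t)$ and substituting $t = su$ gives $\sum_{u}\eta(u)\sum_{s\ne 0}\chi(s(1+u))$. The inner sum equals $q-1$ when $u=-1$ and $-1$ otherwise, so one obtains $G(\eta,\chi)^2 = \eta(-1)q$ (using $\sum_u \eta(u)=0$). This pins down $G(\eta,\chi)$ up to a sign when $p\equiv 1\pmod 4$ (so $\eta(-1)=1$ and $G=\pm\sqrt{q}$) and up to a sign when $p\equiv 3\pmod 4$ (so $\eta(-1)=-1$ and $G=\pm i\sqrt{q}$, with the factor of $i^\ell$ appearing once we track the extension degree carefully).

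Next, I would reduce from $\mathbb F_q=\mathbb F_{p^\ell}$ to $\mathbb F_p$ via the Hasse--Davenport relation. Letting $\eta_\ell,\chi_\ell$ denote the quadratic character and canonical additive character of $\mathbb F_{p^\ell}$ (so $\chi_\ell = \chi_p \circ \mathrm{Tr}_{\mathbb F_{p^\ell}/\mathbb F_p}$), this relation states
\[
-G(\eta_\ell,\chi_\ell) = \bigl(-G(\eta_p,\chi_p)\bigr)^{\ell},
\]
i.e., $G(\eta_\ell,\chi_\ell) = (-1)^{\ell-1}\,G(\eta_p,\chi_p)^{\ell}$. I would sketch its proof by comparing the two $L$-functions attached to $\eta_p$ (one via $\mathbb F_p$, one via $\mathbb F_{p^\ell}$) and matching Euler-product factors, or alternatively by the elementary "lifting the exponent"-style manipulation of the multiplicative structure of norms. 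Granting this, the theorem reduces to evaluating $G(\eta_p,\chi_p)$ exactly.

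The main obstacle is therefore the classical sign determination for $G(\eta_p,\chi_p)$: one must show $G(\eta_p,\chi_p) = \sqrt{p}$ when $p\equiv 1\pmod 4$ and $G(\eta_p,\chi_p) = i\sqrt{p}$ when $p\equiv 3\pmod 4$. This is Gauss's famous result and I would present one of the standard proofs. The cleanest is Schur's matrix argument: the $p\times p$ matrix $A=(\chi(jk))_{j,k\in\mathbb F_p}$ has eigenvalues that one computes via $A^2$, and tracking $\det A$ (which is an explicit Vandermonde-type product) against the trace yields the correct sign. An alternative would be Dirichlet's finite Fourier approach, where one evaluates $\sum_{n=0}^{p-1}e^{2\pi i n^2/p}$ by comparing it, via Poisson summation over a suitable contour or via the functional equation of the theta function, to a determined positive real (or $i$ times positive real) quantity. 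Either way, once the prime-field sign is fixed, combining it with the square computation $G(\eta_p,\chi_p)^2 = \eta(-1)p$ gives $G(\eta_p,\chi_p)^{\ell}$ explicitly, and the Hasse--Davenport reduction yields the two cases of the theorem verbatim, noting that in the $p\equiv 1\pmod 4$ case $G(\eta_p,\chi_p)^\ell = p^{\ell/2}$, while in the $p\equiv 3\pmod 4$ case $G(\eta_p,\chi_p)^\ell = i^\ell p^{\ell/2}$.
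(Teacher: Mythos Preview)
The paper does not prove this theorem at all: it is quoted verbatim as Theorem 5.15 from Lidl--Niederreiter \cite{LN97} and used as a black box in the proof of Lemma \ref{ExplicitS0}. So there is no ``paper's own proof'' to compare against.

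That said, your sketch is correct and is in fact essentially the argument given in \cite{LN97}. The three ingredients you list are exactly the standard ones: the elementary computation $G(\eta,\chi)^2=\eta(-1)q$; the Hasse--Davenport lifting relation $-G(\eta_\ell,\chi_\ell)=(-G(\eta_p,\chi_p))^\ell$ (which applies because the quadratic character of $\mathbb F_{p^\ell}$ is the norm-lift of the quadratic character of $\mathbb F_p$, the norm being surjective on multiplicative groups); and Gauss's sign determination over $\mathbb F_p$. One small point worth making explicit in a full write-up is the verification that $\eta_\ell=\eta_p\circ N_{\mathbb F_{p^\ell}/\mathbb F_p}$, since the Hasse--Davenport relation is stated for lifted characters; without this the reduction does not literally apply. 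Also, the ``sign'' step is genuinely the hard part and your two suggested routes (Schur's determinant argument or the theta/Poisson approach) each require nontrivial work that a full proof would have to carry out rather than cite. With those caveats, the combination yields the two displayed cases exactly as you indicate.
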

\paragraph{Proof of Lemma \ref{ExplicitS0}.}
Combining the assumption that $d=4k+2$ for $k\in \mathbb N$ and Lemma \ref{FSL}, we have  
  \begin{equation*}\label{oneG}
(1_{S_0})^{\vee}(\alpha)  = q^{-1} \delta_0(\alpha) + q^{-d-1} G^d(\eta, \chi) \sum_{r \in {\mathbb F}_q^*} 
\chi\Big(\frac{\|\alpha\|}{4r}\Big).
\end{equation*}
 
It is clear that $q\equiv 3\mod{4}$ if and only if $q=p^\ell$ for some prime $p\equiv 3 \mod{4}$ and odd integer $\ell.$ Hence it follows from Theorem \ref{ExplicitGauss} that  $G^d(\eta, \chi)=-q^{\frac{d}{2}}$ provided that  $q\equiv 3\mod{4}$ and $d=4k+2.$  
Now  the statement of Lemma \ref{ExplicitS0} follows by a simple change of variables (namely, we replace $1/4r$ by  r).   $\hfill\square$   

We are now ready to give a proof of Theorem \ref{Main2}.
\subsection{Proof of Theorem \ref{Main2}}
Assume that $q\equiv 3\mod{4}$ and $d=4k+2$ for $k\in \mathbb N.$ 
By the nesting properties of $L^p$-norms,  we only need to prove the following extension estimate for $S_0:$
\begin{equation*}\label{Red2k} \|(fd\sigma)^\vee\|_{L^{\frac{2d+4}{d}}(\mathbb F_q^d, dm)} \ll \|f\|_{L^{2}(S_0, d\sigma)}.\end{equation*}
By duality, it suffices to establish the following restriction estimate for $S_0$:
\begin{equation*}
\|\widehat{g}\|_{L^2(S_0, d\sigma)} \ll \|g\|_{L^{\frac{2d+4}{d+4}}(\mathbb F_q^d, dm)} := \left( \sum_{m\in \mathbb F_q^d} |g(m)|^{\frac{2d+4}{d+4}} \right)^{\frac{d+4}{2d+4}}.
\end{equation*}
Normalizing the functions $g$, it suffices to show that 
$\|\widehat{g}\|_{L^2(S_0, d\sigma)} \ll 1$
for all functions $g$ on $\mathbb F_q^d$ such that 
\begin{equation}\label{kohgo}\sum\limits_{m\in \mathbb F_q^d} |g(m)|^{\frac{2d+4}{d+4}}=1.\end{equation}
Without loss of generality, we may assume that the function $g$ takes the following form: for $i=0,1, \cdots, L\ll \log{q},$
\begin{equation}\label{Ass2kN} g=\sum_{i=0}^L 2^{-i} 1_{G_i},\end{equation}
where $\{G_i\}$ is a collection of pairwise disjoint subsets of $\mathbb F_q^d.$

From \eqref{kohgo} and \eqref{Ass2kN}, we can also assume that 
$ \sum_{i=0}^L 2^{-\frac{2d+4}{d+4}i} |G_i| =1.$ 
This clearly implies that
\begin{equation}\label{GG}  |G_i|\le 2^{\frac{2d+4}{d+4} i} \quad \mbox{for all}~~i=0,1,2, \ldots, L. \end{equation}

In summary, our task is to prove that
$
\|\widehat{g}\|_{L^2(S_0, d\sigma)} \ll 1
$ for any function $g$ such that for $L \sim \log{q}$ and  a collection $\{G_i\}$ of pairwise disjoint subsets of $\mathbb F_q^d,$ one can write 
$ g=\sum_{i=0}^L 2^{-i} 1_{G_i}$ with \eqref{GG}.

We have
\begin{align*} &\|\widehat{g}\|_{L^2(S_0, d\sigma)} \le \sum_{i=0}^L 2^{-i} \|\widehat{1_{G_i}}\|_{L^2(S_0, d\sigma)}\\
 \ll&  \sum_{\substack{0\le i\le L\\ 1\le 2^{\frac{2d+4}{d+4} i}\le q^{\frac{d}{2}}}} 2^{-i} \|\widehat{1_{G_i}}\|_{L^2(S_0, d\sigma)} + 
\sum_{\substack{0\le i\le L\\ q^{\frac{d}{2}}\le 2^{\frac{2d+4}{d+4} i}\le q^{\frac{d+2}{2}}}} 2^{-i} \|\widehat{1_{G_i}}\|_{L^2(S_0, d\sigma)} + 
\sum_{\substack{0\le i\le L\\ q^{\frac{d+2}{2}}\le 2^{\frac{2d+4}{d+4} i}\le q^{d}}} 2^{-i} \|\widehat{1_{G_i}}\|_{L^2(S_0, d\sigma)} \\
=& \mbox{I} + \mbox{II} +\mbox{III}. \end{align*}

 Utilizing Lemma \ref{WL2} with \eqref{GG}, we get
 $$ \mbox{I} \ll \sum_{\substack{0\le i\le L\\ 1\le 2^{\frac{2d+4}{d+4} i}\le q^{\frac{d}{2}}}} 2^{-i} |G_i|^{\frac{1}{2}} 
 \ll \sum_{\substack{0\le i\le L\\ 1\le 2^{\frac{2d+4}{d+4} i}\le q^{\frac{d}{2}}}} 2^{-i}2^{\frac{d+2}{d+4} i}  \ll 1,$$
 
 $$ \mbox{II} \ll \sum_{\substack{0\le i\le L\\q^{\frac{d}{2}}\le 2^{\frac{2d+4}{d+4} i}\le q^{\frac{d+2}{2}}}} 2^{-i} q^{-\frac{d}{4}}|G_i|
 \ll q^{-\frac{d}{4}}\sum_{\substack{0\le i\le L\\ q^{\frac{d}{2}}  \le 2^{\frac{2d+4}{d+4} i}\le q^{\frac{d+2}{2}}}} 2^{-i}2^{\frac{2d+4}{d+4} i}  \ll q^{-\frac{d}{4}} q^{\frac{d}{4}} = 1$$
 and 
 
 $$\mbox{III} \ll  \sum_{\substack{0\le i\le L\\ q^{\frac{d+2}{2}}\le 2^{\frac{2d+4}{d+4} i}\le q^{d}}} 2^{-i} q^{\frac{1}{2}} |G_i|^{\frac{1}{2}} 
 \ll  q^{\frac{1}{2}} \sum_{\substack{0\le i\le L\\q^{\frac{d+2}{2}}\le 2^{\frac{2d+4}{d+4} i}\le q^{d}}} 2^{-i}2^{\frac{d+2}{d+4} i}  \ll q^{\frac{1}{2}}q^{-\frac{1}{2}}=1.$$

Thus, we complete the proof. $\hfill\square$

\section{Sharp $L^p\to L^4$ extension estimate for spheres of non-zero radii (Theorem \ref{Main1})}\label{sec4}
One of key steps in the proof of Theorem \ref{Main1} is the following lemma on an additive energy estimate of a set on spheres. 
\bigskip

 \begin{lemma}\label{thm2}
Let $S_j$ be the sphere  with non-zero radius $j\ne 0$ in $\mathbb{F}_q^d$ with $d\ge 4$ even. For $A\subset S_j$, let $E(A)$ be the number of tuples $(a, b, c, d)\in A^4$ such that 
$a+b=c+d.$
Then we have 
\[E(A)\ll \frac{|A|^3}{q}+q^{\frac{d-2}{2}}|A|^2.\]
\end{lemma}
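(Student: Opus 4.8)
The plan is to replace the energy by a quantity to which the explicit Fourier transform of the sphere applies, and then to estimate the resulting cubic Fourier sum by splitting off the light cone. First I would make an elementary reduction: writing $E(A)=\sum_{a,b,c\in A}1_A(a+b-c)$ and using $A\subseteq S_j$, we have $1_A(a+b-c)\le 1_{S_j}(a+b-c)$, so $E(A)\le T(A):=\sum_{a,b,c\in A}1_{S_j}(a+b-c)$. It is worth recording that for $a,b,c\in S_j$ one has $\|a+b-c\|=j+2(a-c)\cdot(b-c)$, so $T(A)$ is precisely the number of ``right-angle'' triples $(a,b,c)\in A^3$ with $(a-c)\cdot(b-c)=0$. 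Also, if $|A|\le q^{(d-2)/2}$ then already $E(A)\le|A|^3\le q^{(d-2)/2}|A|^2$; hence I may assume $|A|>q^{(d-2)/2}$, and it suffices to prove $T(A)\ll |A|^3/q+q^{(d-2)/2}|A|^2$.

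Next I would expand $T(A)$ by applying Fourier inversion to $1_{S_j}$:
\[
T(A)=q^{-d}\sum_{\alpha\in\mathbb F_q^d}\widehat{1_{S_j}}(\alpha)\,|\widehat{1_A}(\alpha)|^2\,\overline{\widehat{1_A}(\alpha)} .
\]
The contribution of $\alpha=0$ is $q^{-d}|S_j|\,|A|^3\ll |A|^3/q$, the desired main term. For $\alpha\ne0$, since $d$ is even, Lemma~\ref{FSL} gives $\widehat{1_{S_j}}(\alpha)=q^{-1}G^d(\eta,\chi)\sum_{r\in\mathbb F_q^*}\chi\big(jr+\|\alpha\|/(4r)\big)$, with $|G^d(\eta,\chi)|=q^{d/2}$ by Theorem~\ref{ExplicitGauss}.

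The third step is to split the sum over $\alpha\ne0$ according to whether $\|\alpha\|=0$. The inner sum $\sum_{r\ne0}\chi(jr+\|\alpha\|/(4r))$ equals $-1$ when $\|\alpha\|=0$ and is $O(q^{1/2})$ by Weil's bound on Kloosterman sums when $\|\alpha\|\ne0$; thus $|\widehat{1_{S_j}}(\alpha)|\ll q^{(d-2)/2}$ on $\{\alpha\ne0:\|\alpha\|=0\}$ and $|\widehat{1_{S_j}}(\alpha)|\ll q^{(d-1)/2}$ elsewhere. The cone part is then at most
\[
q^{-d}q^{(d-2)/2}\!\!\sum_{\|\alpha\|=0}\!|\widehat{1_A}(\alpha)|^3\le q^{-d}q^{(d-2)/2}\,\|\widehat{1_A}\|_\infty\!\sum_{\alpha}|\widehat{1_A}(\alpha)|^2\le q^{-d}q^{(d-2)/2}|A|\cdot q^d|A|=q^{(d-2)/2}|A|^2,
\]
using Plancherel and $\|\widehat{1_A}\|_\infty\le|A|$; this is of the required size.

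The remaining off-cone piece, over $\|\alpha\|\ne0$, is the hard part and the main obstacle. Using only the pointwise bound $|\widehat{1_{S_j}}(\alpha)|\ll q^{(d-1)/2}$ with Cauchy--Schwarz and Plancherel produces a term of size $\sim q^{(d-1)/2}|A|^{1/2}E(A)^{1/2}$, which after absorbing into $E(A)\le T(A)$ is a half-power of $q$ too weak in the range $q^{(d-2)/2}<|A|<q^{d/2}$ (it only yields the Stein--Tomas–type exponent). To reach the sharp bound one must exploit genuine cancellation in the family of Kloosterman sums $K(j,\|\alpha\|/4)$ as $\alpha$ ranges over the level sets of the quadratic form — for instance via second-moment estimates for Kloosterman sums, or by a second application of the Gauss-sum evaluation of Theorem~\ref{ExplicitGauss} to the $\alpha$-summation itself (which resolves the weight $\chi(\|\alpha\|/(4r))$), or by treating separately those $A$ carrying a large nonzero Fourier coefficient. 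Granting such an estimate, the off-cone term is also $\ll q^{(d-2)/2}|A|^2$ (after, if necessary, absorbing a small multiple of $T(A)$ into the left-hand side), and summing the three contributions gives $T(A)\ll|A|^3/q+q^{(d-2)/2}|A|^2$, whence $E(A)\le T(A)\ll|A|^3/q+q^{(d-2)/2}|A|^2$.
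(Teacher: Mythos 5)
Your opening reduction is exactly the paper's: $E(A)\le T(A)=\sum_{a,b,c\in A}1_{S_j}(a+b-c)$, and the identity $\|a+b-c\|=j+2(a-c)\cdot(b-c)$ (so that the triples being counted are the orthogonal pairs $(a-c)\cdot(b-c)=0$) is precisely the observation the paper starts from. Your treatment of the diagonal frequency and of the light cone $\{\|\alpha\|=0\}$ is also sound. But the proof is not complete: the off-cone term is the entire content of the lemma, and you leave it as ``granting such an estimate.'' None of the three devices you gesture at is carried out, and it is far from clear any of them closes the gap. A second-moment bound for the Kloosterman sums over level sets of $\|\cdot\|$ would control $\sum_{\|\alpha\|=t}\widehat{1_{S_j}}(\alpha)|\widehat{1_A}(\alpha)|^2\overline{\widehat{1_A}(\alpha)}$ only if you could also decouple the cubic weight in $\widehat{1_A}$, which is exactly where Cauchy--Schwarz loses the half power you identified; and ``absorbing a small multiple of $T(A)$ into the left-hand side'' presupposes the very inequality you are trying to prove. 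So as written this is a genuine gap, not a routine verification.

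The paper closes this step by abandoning Fourier analysis on $S_j$ at exactly the point where you get stuck, and switching to incidence geometry. Having reduced to counting triples with $(x-z)\cdot(y-z)=0$, it splits off the pairs with $\|x-z\|=0$ or $\|y-z\|=0$ (your cone term, but handled by the zero-distance pair count of Lemma~\ref{LemIK} rather than by Plancherel), and for the remaining triples it fixes $z$, lifts $x-z$ and $y-z$ to points $(x-z,\|x-z\|)$ on a paraboloid in $\mathbb F_q^{d+1}$, and observes that orthogonality becomes the additive condition $\alpha+\beta\in\mathcal P$. That count is then a point--hyperplane incidence count, controlled by Vinh's theorem; a dilation trick (replacing $Y$ by all nonzero dilates $\lambda(y-z)$, legitimate by Lemma~\ref{lm55x1}) amplifies the set size by a factor $q$ and is what recovers the sharp exponent $q^{(d-2)/2}|A|$ per fixed $z$ instead of the lossy $q^{(d-1)/2}|A|$. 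If you want to salvage your approach, you should either carry out one of your proposed off-cone arguments in full, or adopt this orthogonality-plus-incidences route for the generic term.
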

We will see later in subsection \ref{chale} that spheres $S_j$ actually contain an affine subspace $H$ of dimension $\frac{d-2}{2}$. Therefore, by taking $A=H$, we have $E(A)=|A|^3=q^{\frac{d-2}{2}}|A|^2$. In other words, Lemma \ref{thm2} is optimal up to a constant factor. 

Note that over prime fields, Rudnev \cite{Ru18} derived the following estimate for sets on non-zero radius spheres in only three and four dimensions. 
$$ E(A) \ll \frac{|A|^3}{q}+|A|^{\frac{5}{2}} + |A|k_0^2 + |A|^2 k_0,$$
where $k_0$ denotes the maximal number of points of $A$ lying on an isotropic line.

The main challenge in the proof of Lemma \ref{thm2} is to give an optimal upper bound on the number of pairs $(x, y)\in A\times A$ such that $||x-y||=0$. Since $A$ is a set on $S_j$, the equation $||x-y||=0$ is equivalent with $x\cdot y=j$. When $d$ is small, the problem can be handled by combinatorial arguments. However, when $d$ is large, the problem becomes difficult. 

It is not hard to construct a set $A\subset S_j$ with many pairs of zero distance. Indeed, suppose $d=4k+2$, and the subspace $\mathbb{F}_{q}^{d-2}\times \{0\}\times \{0\}$ contains $\frac{d-2}{2}$ vectors that are mutually orthogonal (see \cite{hart} for a detailed proof). Let $v_1, \ldots, v_{\frac{d-2}{2}}$ be such vectors. If we change the last coordinate of each $v_i$ from $0$ to $j$ for all $1\le i\le (d-2)/2$, then we have $||v_i||=j$ and $||v_i-v_{i'}||=0$ for all $i\ne i'$. Let $A$ be the subspace spanned by these changed $v_i$, $1\le i\le \frac{d-2}{2}$, then we have $|A|=q^{\frac{d-2}{2}}$ and the number of pairs $(x, y)\in A\times A$ such that $||x-y||=0$ is $|A|^2$. 

Thanks to the following result due to the first and second listed authors in \cite{IK10}, we have an optimal bound on the number of pairs $(x, y)\in A\times A$ such that $||x-y||=0$. 
\bigskip

\begin{lemma}[\cite{IK10}, Theorem 5] \label{LemIK}
Let $S_j$ be the sphere in $\mathbb F_q^d.$
If $d\ge 4$ is even and $A$ is any subset of $S_j$ with $j\ne 0$, then we have
$$ \sum_{x, y\in A: x\cdot y=j} 1 \ll \frac{|A|^2}{q} + q^{\frac{d-2}{2}} |A|.$$
\end{lemma}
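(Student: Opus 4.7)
The plan is to translate the counting problem into a spectral identity on the null cone $S_0$, and then extract the bound from positivity via the sign of a Gauss sum.

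First, the observation that for $x,y\in S_j$ one has $\|x-y\|=2j-2\,x\cdot y$ shows that the condition $x\cdot y=j$ is equivalent to $x-y\in S_0$. Therefore
\[
N \;:=\;\sum_{x,y\in A:\, x\cdot y=j}1 \;=\;\sum_{x,y\in A}1_{S_0}(x-y)\;=\;\sum_{\xi\in\mathbb F_q^d}(1_{S_0})^\vee(\xi)\,|\widehat{1_A}(\xi)|^2,
\]
where the last equality comes from Fourier inversion of $1_{S_0}(x-y)$ and the definition of $\widehat{1_A}$.

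Next, I would specialize Lemma~\ref{FSL} to $j=0$, using that $\eta^d=1$ because $d$ is even, and then apply the elementary identity $\sum_{r\in\mathbb F_q^*}\chi(\|\xi\|/(4r))=q\cdot 1_{S_0}(\xi)-1$ (which follows by the change of variable $s=1/(4r)$ and orthogonality) to obtain the clean decomposition
\[
(1_{S_0})^\vee(\xi)\;=\;q^{-1}\delta_0(\xi)\;-\;q^{-d-1}G^d\;+\;q^{-d}G^d\,1_{S_0}(\xi),
\]
where $G=G(\eta,\chi)$ and $G^d=q^{d/2}\eta(-1)^{d/2}\in\{\pm q^{d/2}\}$. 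Substituting this formula into the Fourier expression for $N$ and invoking Plancherel $\sum_\xi|\widehat{1_A}(\xi)|^2=q^d|A|$ produces the key identity
\[
\sum_{\xi\in S_0}|\widehat{1_A}(\xi)|^2 \;=\; q^{d-1}|A|\;+\;G^d\bigl(N-q^{-1}|A|^2\bigr).
\]

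When $G^d=-q^{d/2}$, positivity of the left-hand side immediately forces $N-q^{-1}|A|^2\le q^{(d-2)/2}|A|$, which is the stated bound. The main obstacle is the opposite sign case $G^d=+q^{d/2}$, in which positivity gives only a lower bound on $N$ and a matching upper bound
\[
\sum_{\xi\in S_0}|\widehat{1_A}(\xi)|^2 \;\ll\; q^{d-1}|A|\;+\;q^{(d-2)/2}|A|^2
\]
is still required. Here I would exploit the structural constraint $A\cap S_0=\emptyset$ (which holds because $S_j\cap S_0=\emptyset$ for $j\ne 0$): Parseval applied to the identity $\sum_m 1_A(m)1_{S_0}(m)=0$, combined with the explicit formula for $(1_{S_0})^\vee$ above, yields the cancellation identity $\sum_{\xi\in S_0}\widehat{1_A}(\xi)=\mp q^{(d-2)/2}|A|$. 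Coupling this linear constraint with a Cauchy--Schwarz estimate on the representation function $r_A(z)=|\{(x,y)\in A^2:x-y=z\}|$, and using that $A$ meets every line through the origin in at most two points (again since $A\subset S_j$ with $j\ne 0$, so no scalar multiple relation $y=cx$ in $A$ survives other than $c=\pm 1$), should allow one to bootstrap into the desired $L^2$ bound on the null cone and close the argument.
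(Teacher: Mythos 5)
This lemma is not proved in the paper at all: it is quoted verbatim as Theorem 5 of \cite{IK10}, so there is no in-paper argument to compare yours against. That said, your setup is sound and closely parallels the paper's proof of Lemma \ref{WL2}: the reduction of the count $N$ to $\sum_{\xi}(1_{S_0})^\vee(\xi)\,|\widehat{1_A}(\xi)|^2$, the evaluation of $(1_{S_0})^\vee$ via Lemma \ref{FSL} and orthogonality, and the exact identity $\sum_{\xi\in S_0}|\widehat{1_A}(\xi)|^2=q^{d-1}|A|+G^d\bigl(N-q^{-1}|A|^2\bigr)$ are all correct, and when $G^d=-q^{d/2}$ positivity does finish the proof exactly as you say. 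This is precisely the mechanism behind Lemmas \ref{ExplicitS0} and \ref{WL2}, which the paper only runs when $d\equiv 2 \bmod 4$ and $q\equiv 3\bmod 4$ --- that is, exactly when the sign is favourable.

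The gap is the opposite case $G^d=+q^{d/2}$, which is the generic one: it occurs for every $d\equiv 0\bmod 4$ and for every even $d$ when $q\equiv 1\bmod 4$, so it cannot be set aside. Since your identity is an exact equivalence, in this case an upper bound on $N$ is literally the same problem as an upper bound on $\sum_{\xi\in S_0}|\widehat{1_A}(\xi)|^2$, and no progress has been made. The repair you sketch does not close it: Cauchy--Schwarz applied to the (correct) linear constraint $\sum_{\xi\in S_0}\widehat{1_A}(\xi)=\mp q^{(d-2)/2}|A|$ gives $\sum_{\xi\in S_0}|\widehat{1_A}(\xi)|^2\ge q^{d-2}|A|^2/|S_0|\sim q^{-1}|A|^2$, i.e.\ a \emph{lower} bound, which is useless for your purpose. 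Note also that the target estimate $\sum_{\xi\in S_0}|\widehat{1_A}(\xi)|^2\ll q^{d-1}|A|+q^{(d-2)/2}|A|^2$ is genuinely false for arbitrary sets of the same cardinality in this sign case (take $A$ to be a $d/2$-dimensional totally isotropic subspace, for which the left-hand side is $|A|^3$), so any proof must exploit $A\subset S_j$ with $j\ne 0$ in an essential way; the observation that $A$ meets each line through the origin in at most two points is a plausible ingredient, but as written ``should allow one to bootstrap'' stands in for the entire remaining difficulty. You would need either to reproduce the actual argument of \cite[Theorem 5]{IK10} or to supply an independent one for this case.
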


The following lemmas are also important in the proof of Lemma \ref{thm2}. 

\begin{lemma}\label{lm55x0}
Let $\cal{P}$ be a paraboloid in $\mathbb{F}_q^{d+1}$. Suppose that $\cal{A}$ and $\cal{B}$ are subsets in $P$ with the property that there are no two points $x=(x', ||x'||)$ and $y=(y', ||y'||)$ in $\cal{A}$ such that $x'=\lambda y'$ with $\lambda \ne 0, 1$. Then the number of pairs $(\alpha, \beta)\in \cal{A}\times \cal{B}$ with $\alpha+\beta\in \cal{P}$ is bounded by $\sim$
\[\frac{|\cal{A}||\cal{B}|}{q}+q^{\frac{d-1}{2}}|{\cal A}|^{1/2}
|{\cal B}|^{1/2}.\]
\end{lemma}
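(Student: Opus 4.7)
The strategy is to reduce the paraboloid incidence $\alpha+\beta\in\mathcal{P}$ to an orthogonality condition on the base coordinates, then estimate the resulting count by a short character sum whose error is tamed by a multiplicity bound coming from the hypothesis on $\mathcal{A}$. Parameterize $\mathcal{A}=\{(a,\|a\|):a\in A'\}$ and $\mathcal{B}=\{(b,\|b\|):b\in B'\}$ for $A',B'\subset\mathbb{F}_q^d$. The identity $\|a+b\|=\|a\|+\|b\|+2\,a\cdot b$ and the assumption that $q$ is odd give $\alpha+\beta\in\mathcal{P}\iff a\cdot b=0$, so the problem reduces to bounding
$$N:=\bigl|\{(a,b)\in A'\times B':\,a\cdot b=0\}\bigr|.$$

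Applying the orthogonality relation $q\cdot\mathbf{1}_{s=0}=\sum_{t\in\mathbb{F}_q}\chi(ts)$ and isolating the $t=0$ term, I would write $N=\frac{|A'||B'|}{q}+\mathcal{E}$ with
$$\mathcal{E}=\frac{1}{q}\sum_{t\in\mathbb{F}_q^*}\sum_{a\in A',\,b\in B'}\chi(t\,a\cdot b)=\frac{1}{q}\sum_{y\in\mathbb{F}_q^d}M(y)\,\widehat{1_{B'}}(y),$$
using $\sum_{b}1_{B'}(b)\chi(ta\cdot b)=\widehat{1_{B'}}(-ta)$ in the paper's Fourier convention and grouping the $(a,t)$-sum by $y=-ta$, where $M(y):=|\{(a,t)\in A'\times\mathbb{F}_q^*:-ta=y\}|$. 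The hypothesis on $\mathcal{A}$ now forces $M(y)\le 1$ for every $y\ne 0$: two distinct representations $-t_1 a_1=-t_2 a_2=y\ne 0$ would give $a_1=(t_2/t_1)a_2$ with $t_2/t_1\in\mathbb{F}_q^*\setminus\{1\}$, contradicting the scalar-ray hypothesis (the forbidden $\lambda=0$ case would force $y=0$). Therefore $\sum_y M(y)^2\le\sum_y M(y)\le(q-1)|A'|$, and Cauchy--Schwarz combined with the Plancherel identity $\sum_y|\widehat{1_{B'}}(y)|^2=q^d|B'|$ yields
$$|\mathcal{E}|\le\frac{1}{q}\bigl((q-1)|A'|\bigr)^{1/2}\bigl(q^d|B'|\bigr)^{1/2}\ll q^{\frac{d-1}{2}}|A'|^{1/2}|B'|^{1/2},$$
as required. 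The only technicality is the possibility that $0\in A'$: one splits this element off beforehand (its contribution to $N$ is $|B'|$) and applies the argument to $A'\setminus\{0\}$, on which the multiplicity analysis is uniform.

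The principal obstacle is establishing the multiplicity bound $M(y)\le 1$, which is the only place where the scalar-ray hypothesis on $\mathcal{A}$ is used in an essential way; without it, the best one gets is $\sum_y M(y)^2\le q|A'|^2$, losing the critical factor of $q^{1/2}$ and producing a bound of size $q^{d/2}|A'|^{1/2}|B'|^{1/2}$ instead of the target $q^{(d-1)/2}|A'|^{1/2}|B'|^{1/2}$. Geometrically, the hypothesis is the projective-incidence statement that distinct elements of $A'\setminus\{0\}$ determine distinct directions in $\mathbb{F}_q^d$, so the multiplied frequencies $\{-ta:a\in A'\setminus\{0\},\,t\in\mathbb{F}_q^*\}$ are all simple, after which Plancherel on the $b$-variable finishes the estimate.
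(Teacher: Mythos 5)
Your proof is correct, and it takes a genuinely different route from the paper's. The paper reduces the count (after the same reduction $\alpha+\beta\in\mathcal{P}\iff \alpha'\cdot\beta'=0$) to a point--hyperplane incidence problem: each $\alpha'\in\mathcal{A}'$ is assigned the hyperplane $\{\alpha'\cdot x'=0\}$, the scalar-ray hypothesis guarantees these hyperplanes are pairwise distinct, and Vinh's incidence theorem \cite{vinhajm} is then quoted as a black box to give $I(\mathcal{B}',H)\le\frac{|\mathcal{B}'||H|}{q}+q^{(d-1)/2}\sqrt{|\mathcal{B}'||H|}$. You instead prove the needed bound directly by orthogonality of characters, Cauchy--Schwarz, and Plancherel, using the scalar-ray hypothesis to force the multiplicity function $M(y)$ to be $\le 1$ away from the origin. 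Since Vinh's theorem is itself established by a Fourier/spectral argument, your proof essentially inlines a self-contained proof of the special case needed (hyperplanes through the origin), whereas the paper's is shorter and more modular by citing the general incidence result. Both approaches hinge on exactly the same use of the hypothesis — turning ``distinct rays in $\mathcal{A}'$'' into a distinctness/multiplicity-one statement — and both give the same bound. One small point in your favor: you explicitly isolate the possibility $0\in\mathcal{A}'$ (where $M(0)=q-1$ would ruin the multiplicity bound), which the paper glosses over when assigning a hyperplane to $\alpha'=0$.
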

\begin{proof}
For $\alpha\in \cal{A}$ and $\beta\in \cal{B}$, we write $\alpha=({\alpha}', ||{\alpha}'||)$ and $\beta=({\beta}', ||{\beta}'||)$.

 It is clear that if $\alpha+\beta\in \cal{P}$, then we have 
\[||{\alpha}'+{\beta}'||=||{\alpha}'||+||{\beta}'||.\]
This implies that ${\alpha}'\cdot {\beta}'=0$. 

Note that ${\alpha}', {\beta}'\in \mathbb{F}_q^d$.  Let ${\cal A}'$ and ${\cal B}'$ be the projections of $\cal{A}$ and $\cal{B}$ on $\mathbb{F}_q^d$, respectively. Let $N$ be the number of pairs $({\alpha}', {\beta}')\in {\cal A}'\times {\cal B}'$ such that ${\alpha}'\cdot {\beta}'=0$. 

For each ${\alpha}'\in {\cal A}'$, we assign it with a hyperplane in $\mathbb{F}_q^d$ defined by the equation ${\alpha}'\cdot x'=0$. Let $H$ be the set of all corresponding hyperplanes in $\mathbb F_q^d$. It follows from the assumption of the lemma that all hyperplanes in $H$ are distinct. So we have $|H|=|{\cal A}'|$.

It is clear that $N$ is equal to the number of incidences between hyperplanes in $H$ and points in ${\cal B}'$, denoted by $I({\cal B}', H)$. One can apply a point-plane incidence theorem due to Vinh in \cite{vinhajm} to achieve the following
\[I({\cal B}', H)\le \frac{|{\cal B}'||H|}{q}+q^{\frac{d-1}{2}}\sqrt{|{\cal B}'||H|}=\frac{|{\cal A}'||{\cal B}'|}{q}+q^{\frac{d-1}{2}}\sqrt{|{\cal A}'||{\cal B}'|}.\]
Using the fact that $|{\cal A}'|=|\cal{A}|$ and $|{\cal B}'|=|{\cal B}|$, the lemma follows. 
\end{proof}
\bigskip

\begin{lemma}\label{lm55x1}
Suppose that $S_j$ is the sphere centered at the origin of radius $j\ne 0$. Then there are no three distinct points $x, y, z\in S_j$ such that 
\[x-z=\lambda (y-z)\]
with $\lambda \ne 0, 1$ and $||y-z||\ne 0$.
\end{lemma}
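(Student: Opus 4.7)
My plan is to exploit the collinearity relation together with the common norm $\|x\|=\|y\|=\|z\|=j$ to force $y\cdot z=j$, which by a direct expansion of $\|y-z\|$ will contradict the hypothesis $\|y-z\|\ne 0$.

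Concretely, the first step is to rewrite the hypothesis $x-z=\lambda(y-z)$ in the affine-combination form $x=\lambda y+(1-\lambda)z$. Expanding the quadratic form via bilinearity of the dot product then yields
\[
\|x\|=\lambda^2\|y\|+2\lambda(1-\lambda)\, y\cdot z+(1-\lambda)^2\|z\|.
\]
Substituting $\|x\|=\|y\|=\|z\|=j$ and using the elementary identity $1-\lambda^2-(1-\lambda)^2=2\lambda(1-\lambda)$, this collapses to
\[
2\lambda(1-\lambda)\, j=2\lambda(1-\lambda)\, y\cdot z.
\]

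Since $q$ is odd the element $2$ is invertible in $\mathbb F_q$, and by hypothesis $\lambda\ne 0,1$, so the scalar $2\lambda(1-\lambda)$ is nonzero and can be cancelled. This forces $y\cdot z=j$. Plugging this back into the direct expansion
\[
\|y-z\|=\|y\|-2\, y\cdot z+\|z\|=j-2j+j=0,
\]
which contradicts the assumption $\|y-z\|\ne 0$, completing the argument.

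I do not foresee a serious obstacle here; the only delicate point is verifying that the cancellation in the displayed identity is legitimate, which is precisely why the three hypotheses $\lambda\ne 0$, $\lambda\ne 1$, and $q$ odd all appear. Note that the distinctness of $x,y,z$ is consistent with these conditions together with $\|y-z\|\ne 0$ (the latter forcing $y\ne z$, while $\lambda\ne 0$ forces $x\ne z$ and $\lambda\ne 1$ forces $x\ne y$), so the distinctness hypothesis is in fact redundant given the other assumptions.
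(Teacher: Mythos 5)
Your proof is correct. The paper dispatches this lemma in one sentence by observing that $x,y,z$ would be three distinct points lying on a common line whose direction vector $y-z$ is non-isotropic, and then citing the fact that such a line meets $S_j$ in at most two points. You instead carry out the underlying algebra explicitly: writing $x=\lambda y+(1-\lambda)z$, expanding $\|x\|=j$, and cancelling the nonzero scalar $2\lambda(1-\lambda)$ (legitimate since $q$ is odd and $\lambda\ne 0,1$) forces $y\cdot z=j$, hence $\|y-z\|=0$, contradicting the hypothesis. The two arguments rest on the same geometric fact — a quadratic in the line parameter has at most two roots when the leading coefficient $\|y-z\|$ is nonzero — but yours is self-contained, whereas the paper appeals to the line–sphere intersection bound without proof. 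Your closing remark that the distinctness hypothesis on $x,y,z$ is in fact a consequence of $\lambda\ne 0,1$ together with $\|y-z\|\ne 0$ is a correct and worthwhile observation that the paper does not make.
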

\begin{proof}
Indeed, if there exist such points, this means that $x, y, z$ lie on both a line and $S_j$, and the norm of its direction vector is non-zero. This is impossible, because the line intersects $S_j$ in at most $2$ points. 
\end{proof}
\bigskip

\paragraph{Proof of Lemma \ref{thm2}.} Since $A\subset S_j,$ it is clear that the number of tuples $(x, y, z, t)\in A^4$ with $x+y=z+t$ is at most  the number of triples $(x, y, z)\in A^3$ such that $x+y-z\in S_j$. In other words, if $A\subset S_j,$ then
\begin{equation}\label{easyE} E(A)\le  \sum_{x\in A} \sum_{y\in A} \sum_{z\in A}  1_{S_j}(x+y-z).\end{equation}
It is observed in \cite{Ru18} that if $x,y,z\in A\subset S_j$ with $x+y-z\in S_j,$ then  $(x-z)\cdot (y-z)=0.$ Indeed, suppose that $x,y,z\in A\subset S_j$ and $x+y-z\in S_j.$ Then  it satisfies that
\[||x+y-z||=j.\]
Namely we have 
$$ \|x\|+ \|y\|+ \|z\| +2x\cdot y-2y\cdot z-2x\cdot z=j,$$
which is equivalent to
\[||x||+||y||+||z||+2y\cdot(x-z)-2x\cdot z=j.\]
Since $\|x\|=\|y\|=\|z\| =j,$ we obtain 
$$ y\cdot(x-z)-x\cdot z=-j,$$
which can be rewritten by 
$$ y\cdot(x-z)-x\cdot z=-z\cdot z,$$ because $z\cdot z=j$ for $z\in S_j.$
Namely, it satisfies that
\[(x-z)\cdot (y-z)=0.\]

From \eqref{easyE} and this fact, in order to complete the proof of  Lemma \ref{thm2}, it will be enough to show that
$$ \sum_{x,y,z\in A: (x-z)\cdot (y-z)=0} 1 \ll \frac{|A|^3}{q}+q^{\frac{d-2}{2}}|A|^2.
$$ 
We now consider two cases:

{\bf Case $1$:} We count the number of triples $(x, y, z)\in A^3$ with the desired property such that  $||x-z||=0$ or $||y-z||=0$. 

Since $\|a-b\|=2j-2a\cdot b$ for $a,b\in A\subset  S_j,$ it follows from Lemma \ref{LemIK} that the number of pairs $(a, b)\in A^2$ with $||a-b||=0$ is bounded by $\sim$ $(|A|^2/q+q^{(d-2)/2}|A|)$. 
Thus, the number of such triples is bounded by the quantity similar to
\[\frac{|A|^3}{q}+q^{\frac{d-2}{2}}|A|^2.\]

{\bf Case $2$:} For each fixed $z\in A$, we shall count the number of triples $(x, y, z)\in A^3$ with the desired property that $||x-z||\ne 0$ and $||y-z||\ne 0$. 

For a fixed $z\in A,$ define \[X:=\{(x-z, ||x-z||)\colon x\in A, ||x-z||\ne 0\},\] 
and \[Y:=\{(y-z, ||y-z||)\colon y\in A, ||y-z||\ne 0\}\]
as subsets in a paraboloid in $\mathbb{F}_q^{d+1}$.

We observe that if $(x-z)\cdot (y-z)=0$, then we have 
\[(x-z, ||x-z||)+(y-z, ||y-z||)\in {\cal P},\]
where ${\cal P}$ denotes the paraboloid in $\mathbb F_q^{d+1}.$
Thus, we may reduce the problem to counting the number of pairs $(\alpha, \beta)\in X\times Y$ such that $\alpha+\beta \in {\cal P}$. We denote this number by $T(X, Y)$.
We also see that the condition $(x-z)\cdot (y-z)=0$ implies $(x-z)\cdot (\lambda y-\lambda z)=0$ for any $\lambda \ne 0, 1$. 
It follows from Lemma \ref{lm55x1} that there are no two pairs $(x,z), (y, z)\in A^2$ such that $x-z=\lambda (y-z)$ with $\lambda \ne 0,1$. Thus, for each fixed $z\in A,$ we can define a new set 
\[Y':=\{(\lambda (y-z), \lambda^2||y-z||)\colon y\in A, ||y-z||\ne 0, \lambda \ne 0, 1 \}.\]

It is clear that $|Y'|=(q-2)|Y|$ and $(q-2)T(X, Y)= T(X, Y')$. Note that the set $Y'$ satisfies the condition of Lemma \ref{lm55x0}. Therefore,  we can apply Lemma \ref{lm55x0} to obtain that 
\[T(X, Y')\ll \frac{(q-2)|A|^2}{q}+q^{\frac{d-1}{2}}|A|^{1/2}|Y'|^{1/2},\]
where we have used the fact that $|X|=|Y|\le |A|.$
Hence, $T(X, Y)\ll \frac{|A|^2}{q}+q^{\frac{d-2}{2}}|A|.$
Summing over all $z\in A$, we obtain the desired estimate. Thus, the proof of Lemma \ref{thm2} is complete.$\hfill\square$

As a consequence of Lemma \ref{thm2}, we have the  following weak-type $L^4$ extension estimate, which will be used in the proof of Theorem \ref{Main1}.
\begin{lemma}\label{WeakL4k}
Let $d\sigma$ denote the normalized surface measure on the sphere $S_j \subset \mathbb F_q^d$ with non-zero radius $j\ne 0.$ If $d\ge 4$ is even and $A\subset S_j,$ then we have
$$ \|(1_Ad\sigma)^\vee\|_{L^{4}(\mathbb F_q^d, dm)} 
\ll \left\{  \begin{array}{ll} q^{\frac{-3d+3}{4}} |A|^{\frac{3}{4}} \quad&\mbox{for}~~q^{\frac{d}{2}} \le |A|\ll q^{d-1}\\
q^{\frac{-5d+6}{8}} |A|^{\frac{1}{2}} \quad&\mbox{for}~~ q^{\frac{d-2}{2}} \le |A|\le q^{\frac{d}{2}}\\
q^{\frac{-3d+4}{4}}|A|^{\frac{3}{4}} \quad&\mbox{for}~~ 1 \le |A|\le q^{\frac{d-2}{2}},\end{array}\right.
$$
\end{lemma}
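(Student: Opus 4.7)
The plan is to convert the $L^4$ extension estimate into an additive energy estimate for $A$, then apply either Lemma \ref{thm2} or the trivial count depending on the size of $A$. Expanding $(1_A d\sigma)^\vee(m) = |S_j|^{-1}\sum_{x\in A}\chi(m\cdot x)$, raising to the fourth power, and applying the orthogonality relation of $\chi$ to the outer sum over $m\in \mathbb F_q^d$ collapses the quadruple sum into the additive energy. Using $|S_j|\sim q^{d-1}$, this yields the identity
\begin{equation*}
\|(1_A d\sigma)^\vee\|_{L^4(\mathbb F_q^d, dm)}^{4} \sim \frac{E(A)}{q^{3d-4}},
\end{equation*}
and hence $\|(1_A d\sigma)^\vee\|_{L^4(\mathbb F_q^d, dm)} \sim q^{-(3d-4)/4}E(A)^{1/4}$.

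Plugging the bound $E(A)\ll |A|^3/q + q^{(d-2)/2}|A|^2$ of Lemma \ref{thm2} into this identity and extracting the fourth root yields two terms, namely $q^{-(3d-3)/4}|A|^{3/4}$ coming from the $|A|^3/q$ summand and $q^{(-5d+6)/8}|A|^{1/2}$ coming from the $q^{(d-2)/2}|A|^2$ summand. A direct comparison of exponents shows that the two terms coincide exactly when $|A|=q^{d/2}$; the first dominates for $|A|\ge q^{d/2}$, producing the first case of the lemma, while the second dominates for $|A|\le q^{d/2}$, producing the second case.

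For the small-set regime $1\le |A|\le q^{(d-2)/2}$, I would discard Lemma \ref{thm2} in favor of the trivial bound $E(A)\le |A|^3$ (for each choice of $a,b,c\in A$, the equation $a+b=c+d$ determines $d$ uniquely). Feeding this into the identity above gives $\|(1_A d\sigma)^\vee\|_{L^4(\mathbb F_q^d, dm)}\ll q^{-(3d-4)/4}|A|^{3/4}$, and a quick exponent check confirms that this improves on the second-case bound $q^{(-5d+6)/8}|A|^{1/2}$ precisely when $|A|\le q^{(d-2)/2}$. Since the heavy lifting has already been carried out in the proof of Lemma \ref{thm2}, no substantial obstacle is anticipated here; the only remaining work is bookkeeping at the two threshold values $|A|\sim q^{d/2}$ and $|A|\sim q^{(d-2)/2}$ so that the three regimes fit together without leaving a gap.
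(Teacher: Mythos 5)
Your proposal is correct and follows essentially the same route as the paper: express $\|(1_A d\sigma)^\vee\|_{L^4}^4$ via orthogonality as $q^d E(A)/|S_j|^4 \sim E(A)/q^{3d-4}$, then apply Lemma~\ref{thm2} (with the trivial bound $E(A)\le|A|^3$ in the small-set regime) and compare exponents at the thresholds $|A|\sim q^{(d-2)/2}$ and $|A|\sim q^{d/2}$. The paper compresses the case analysis into the phrase ``by a direct computation,'' but the bookkeeping you spell out is exactly what is meant.
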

\begin{proof} By expanding $\|(1_Ad\sigma)^\vee\|_{L^{4}(\mathbb F_q^d, dm)}$ and applying the orthogonality property of $\chi$, we see 
$$ \|(1_Ad\sigma)^\vee\|_{L^{4}(\mathbb F_q^d, dm)} =\frac{q^{\frac{d}{4}}}{|S_j|} E(A)^{1/4} \sim q^{\frac{-3d+4}{4}}E(A)^{1/4},$$ 
where $E(A)=\sum\limits_{a,b,c,d\in A: a+b=c+d} 1.$ When $1\le |A|\le q^{\frac{d-2}{2}}$, we use the trivial estimate that $E(A)\le |A|^3.$ On the other hand, when $ q^{\frac{d-2}{2}} \le |A|\ll q^{d-1},$ applying Lemma \ref{thm2} yields the statement of Lemma \ref{WeakL4k} by a direct computation.
\end{proof}    
We are now ready to give a proof of Theorem \ref{Main1}.
\paragraph{Proof of Theorem \ref{Main1}.}
By the nesting properties of $L^p$-norms, it suffices to establish the following  extension estimate:
\begin{equation}\label{Red1} \|(fd\sigma)^\vee\|_{L^{4}(\mathbb F_q^d, dm)} \ll \|f\|_{L^{4d/(3d-2)}(S_j, d\sigma)}\sim \left(q^{-d+1} \sum_{x\in S_j} |f(x)|^{\frac{4d}{3d-2}}\right)^{\frac{3d-2}{4d}}.\end{equation}

To prove \eqref{Red1}, by normalizing functions $f$, it will be enough to prove 
$$\Omega:=q^{\frac{3d^2-5d+2}{4d}} \|(fd\sigma)^\vee\|_{L^{4}(\mathbb F_q^d, dm)} \ll 1,$$
assuming that
\begin{equation}\label{Ass1k} \sum_{x\in S_j} |f(x)|^{\frac{4d}{3d-2}}=1.\end{equation}
Without loss of generality, we may assume that the function $f$ can be decomposed as follows: for $k=0,1, \cdots, L\ll \log{q},$
\begin{equation}\label{Ass2k} f=\sum_{k=0}^L 2^{-k} 1_{A_k},\end{equation}
where $\{A_k\}$ is a collection of pairwise disjoint subsets of $S_j.$
From \eqref{Ass1k} and \eqref{Ass2k}, we have  
$$ \sum_{k=0}^L 2^{-\frac{4d}{3d-2}k} |A_k| =1,$$ 
This implies that  
\begin{equation}\label{Ass3k} |A_k|\le  2^{\frac{4d}{3d-2}k}\quad \mbox{for all}~~ k=0,1,2,\ldots.\end{equation}
It follows that
$$\Omega=q^{\frac{3d^2-5d+2}{4d}} \|(fd\sigma)^\vee\|_{L^{4}(\mathbb F_q^d, dm)} \le q^{\frac{3d^2-5d+2}{4d}} \sum_{k=0}^L 2^{-k} \|(1_{A_k}d\sigma)^\vee\|_{L^{4}(\mathbb F_q^d, dm)}$$
$$\ll  q^{\frac{3d^2-5d+2}{4d}}\sum_{\substack{0 \leq k \leq L\\ 1\le 2^{\frac{4d}{3d-2}k} \leq  q^{\frac{d-2}{2}}}} 2^{-k} \|(1_{A_k}d\sigma)^\vee\|_{L^{4}(\mathbb F_q^d, dm)}$$
 $$+  q^{\frac{3d^2-5d+2}{4d}}\sum_{\substack{0 \leq k \leq L\\ 
 q^{\frac{d-2}{2}} \le 2^{\frac{4d}{3d-2}k} \leq  q^{\frac{d}{2}}}} 2^{-k} \|(1_{A_k}d\sigma)^\vee\|_{L^{4}(\mathbb F_q^d, dm)}$$
 $$+q^{\frac{3d^2-5d+2}{4d}}\sum_{\substack{0 \leq k \leq L\\ 
 q^{\frac{d}{2}}\le 2^{\frac{4d}{3d-2}k} \ll  q^{d-1}}} 2^{-k} \|(1_{A_k}d\sigma)^\vee\|_{L^{4}(\mathbb F_q^d, dm)}$$
 $$=: \Omega_1 + \Omega_2 + \Omega_3.
 $$ 

Applying Lemma \ref{WeakL4k} with \eqref{Ass3k}, we have
$$ \Omega_1 \ll q^{\frac{-d+2}{4d}} \sum_{\substack{0 \leq k \leq L\\ 1\le 2^{\frac{4d}{3d-2}k} \leq  q^{\frac{d-2}{2}}}} 2^{-k} |A_k|^{\frac{3}{4}} 
\ll q^{\frac{-d+2}{4d}} \sum_{\substack{0 \leq k \leq L\\ 1\le 2^{\frac{4d}{3d-2}k} \leq  q^{\frac{d-2}{2}}}} 2^{\frac{2}{3d-2}k} \ll q^{\frac{-d+2}{4d}} q^{\frac{d-2}{4d}}=1,$$

\begin{align*} \Omega_2 &\ll q^{\frac{d^2-4d+4}{8d}}  \sum_{\substack{0 \leq k \leq L\\ q^{\frac{d-2}{2}} \le 2^{\frac{4d}{3d-2}k} \leq  q^{\frac{d}{2}}}} 2^{-k} |A_k|^{\frac{1}{2}} \\
&\ll q^{\frac{d^2-4d+4}{8d}}  \sum_{\substack{0 \leq k \leq L\\ q^{\frac{d-2}{2}} \le 2^{\frac{4d}{3d-2}k} \leq  q^{\frac{d}{2}}}} 2^{\frac{-d+2}{3d-2}k} \ll q^{\frac{d^2-4d+4}{8d}} q^{\frac{-d^2+4d-4}{8d}} =1,\end{align*}
and 
$$\Omega_3 \ll q^{\frac{-d+1}{2d}}\sum_{\substack{0 \leq k \leq L\\ q^{\frac{d}{2}}\le 2^{\frac{4d}{3d-2}k} \ll  q^{d-1}}} 2^{-k} |A_k|^{\frac{3}{4}} 
\ll  q^{\frac{-d+1}{2d}}\sum_{\substack{0 \leq k \leq L\\ q^{\frac{d}{2}}\le 2^{\frac{4d}{3d-2}k} \ll  q^{d-1}}} 2^{\frac{2}{3d-2}k} \ll  q^{\frac{-d+1}{2d}}  q^{\frac{d-1}{2d}} =1. $$
Thus, the proof is complete. $\hfill\square$

\subsection{Sharpness of Theorem \ref{Main1}}\label{chale}
To prove the sharpness, it will be enough from the second necessary condition in \eqref{Necessary2} to show that the sphere $S_j$ with $j\ne 0$ contains an affine subspace $H$ with $|H|=q^{(d-2)/2}$ for even $d\ge 4.$
It is well known from \cite[P.79]{Gr02} or \cite[Theorem 1]{AA16} that  if $d$ is even then  the sphere $S_j$ can be equivalently classified as the following variety:
\begin{equation}\label{TV}\widetilde{S_j}:=\{x\in \mathbb F_q^d: x_1^2-x_2^2+ \cdots+ x_{d-3}^2-x_{d-2}^2 + x_{d-1}^2- \alpha x_d^2=j\},\end{equation}
where $\alpha$ takes 1 or a fixed non-square number of $\mathbb F_q^*$ which is determined by $1=\eta((-1)^{d/2})~ \eta(\alpha)$ for the quadratic character $\eta$ of $\mathbb F_q^*.$ Hence  $\widetilde{S_j}$ contains a $(d-2)/2$-dimensional affine subspace
$$\left\{(t_1,t_1, \cdots, t_{(d-2)/2}, t_{(d-2)/2}, a, b)\in \mathbb F_q^d: t_j\in \mathbb F_q, j=1,2,\ldots, (d-2)/2\right\},$$
where $(a,b)\in \mathbb F_q^2$ is a solution to the equation $a^2-\alpha b^2=j\ne 0.$ Thus, $S_j$ also contains a $\frac{d-2}{2}$-dimensional affine subspace. This completes the sharpness of Theorem \ref{Main1}.

\section{Appendix} \label{Ap4}
In this appendix, we provide a proof of  the duality of extension and restriction estimates for a variety $(V, d\sigma)$ in $(\mathbb F_{q*}^d, dx).$
More precisely we prove the following theorem whose proof is well known in the Euclidean setting.
\begin{theorem} [Duality of extension and restriction estimates]\label{ThmDual}
Let $1\le p, r\le \infty$ and $C$ be a constant. Then the following two statements are equivalent:
\begin{enumerate}
\item
For all functions $f: V\to \mathbb C,$ we have 
$\| (fd\sigma)^{\vee}\|_{L^r(\mathbb F_q^d, dm)} \le C \|f\|_{L^p(V, d\sigma)}.$
\item  For all functions $g: \mathbb F_q^d \to \mathbb C,$ we have
$\| \widehat{g}\|_{L^{p'}(V, d\sigma)} \le C \|g\|_{L^{r'}(\mathbb F_q^d, dm)}.$
\end{enumerate}
\end{theorem}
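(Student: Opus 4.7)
The plan is to proceed in two short steps: first establish a Parseval-type identity that converts a pairing on $\mathbb F_q^d$ into a pairing on $V$, and then apply the standard $L^p$--$L^{p'}$ duality on each side together with H\"older's inequality. No deep harmonic analysis is required; this is the finite-field analogue of the classical argument, and the only thing to watch is that the normalization of $d\sigma$ matches the prefactor $1/|V|$ appearing in $(fd\sigma)^\vee$.

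First I would prove the identity
\[
\sum_{m\in \mathbb F_q^d} (fd\sigma)^\vee(m)\,\overline{g(m)} \;=\; \frac{1}{|V|}\sum_{x\in V} f(x)\,\overline{\widehat{g}(x)},
\]
valid for every $f:V\to\mathbb C$ and $g:\mathbb F_q^d\to\mathbb C$. This follows by inserting the definition of $(fd\sigma)^\vee$, swapping the order of summation, and recognizing the inner sum over $m$ as $\overline{\widehat g(x)}$ (using $\overline{\chi(m\cdot x)}=\chi(-m\cdot x)$). Since $d\sigma$ assigns mass $|V|^{-1}$ to each point of $V$, the right-hand side is precisely $\langle f,\widehat g\rangle_{L^2(V,d\sigma)}$, while the left-hand side is $\langle (fd\sigma)^\vee,g\rangle_{L^2(\mathbb F_q^d,dm)}$. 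Thus
\[
\langle (fd\sigma)^\vee,\,g\rangle_{L^2(\mathbb F_q^d,dm)} \;=\; \langle f,\,\widehat g\rangle_{L^2(V,d\sigma)}. \qquad (*)
\]

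Next I would use the standard $L^p$--$L^{p'}$ duality on the finite sets $V$ and $\mathbb F_q^d$, which holds with equality because the underlying measure spaces are finite-dimensional (one may realize the extremizer explicitly by taking the appropriate normalized power of $|\widehat g|$ or $|(fd\sigma)^\vee|$). Assuming (1), for every $g$ one has
\[
\|\widehat g\|_{L^{p'}(V,d\sigma)} = \sup_{\|f\|_{L^p(V,d\sigma)}\le 1}\bigl|\langle f,\widehat g\rangle_{L^2(V,d\sigma)}\bigr|,
\]
and by $(*)$ followed by H\"older on $\mathbb F_q^d$ the right-hand side is at most
\[
\sup_{\|f\|_{L^p(V,d\sigma)}\le 1}\|(fd\sigma)^\vee\|_{L^r(\mathbb F_q^d,dm)}\,\|g\|_{L^{r'}(\mathbb F_q^d,dm)} \;\le\; C\,\|g\|_{L^{r'}(\mathbb F_q^d,dm)},
\]
which is (2). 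Conversely, assuming (2), for every $f$ write
\[
\|(fd\sigma)^\vee\|_{L^r(\mathbb F_q^d,dm)} = \sup_{\|g\|_{L^{r'}}\le 1}\bigl|\langle (fd\sigma)^\vee,g\rangle_{L^2(\mathbb F_q^d,dm)}\bigr|,
\]
and by $(*)$ and H\"older on $V$ this is bounded by $C\|f\|_{L^p(V,d\sigma)}$, giving (1).

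There is no genuine obstacle here; the only bookkeeping to be careful about is the $1/|V|$ factor, which is what forces the Parseval identity $(*)$ to come out symmetric in the two pairings rather than off by a constant depending on $q$ or $|V|$. Once $(*)$ is in hand, the two implications are completely symmetric, and the theorem follows.
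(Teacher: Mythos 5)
Your proposal is correct and follows essentially the same route as the paper: you establish the pairing identity $\langle (fd\sigma)^\vee, g\rangle_{(\mathbb F_q^d,dm)} = \langle f, \widehat g\rangle_{(V,d\sigma)}$ (the paper's equation \eqref{InnerP}), then invoke the finite-measure-space $L^p$--$L^{p'}$ duality with explicit extremizers (the paper's Lemma \ref{l3.3}) together with H\"older's inequality in each direction. The only difference is presentational — the paper writes out the three cases $p=1$, $1<p<\infty$, $p=\infty$ of the duality lemma in full, while you sketch the extremizer; the underlying argument is the same.
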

 
To prove the theorem, we need some definitions and notations.
Given functions $f_1, f_2: (V, d\sigma) \to \mathbb C$ and $g_1, g_2: (\mathbb F_q^d, dm) \to \mathbb C,$ we define
$$ <f_1, f_2>_{(V, d\sigma)}:= \int_{V} f_1(x) \overline{f_2(x)} ~ d\sigma(x),$$
$$ <g_1, g_2>_{(\mathbb F_q^d, dm)} := \int_{\mathbb F_q^d} g_1(m) \overline{g_2(m)} ~dm.$$
By a direct computation, it is not hard to prove that
\begin{equation}\label{InnerP} <f, \widehat{g}>_{(V, d\sigma)} = < (fd\sigma)^{\vee}, g >_{(\mathbb F_q^d, dm)}.\end{equation}

We also need the following lemma.

\begin{lemma}\label{l3.3} Let $1\leq p \le \infty.$  Then we have
\begin{enumerate} 
\item $\displaystyle\|f\|_{L^p(V, d\sigma)}=\sup_{\|g\|_{L^{p'}(V, d\sigma)}\le 1} | < f, g>_{(V, d\sigma)}|.$
\item
$ \displaystyle\|g\|_{L^p(\mathbb F_q^d, dm)} =  \sup_{\|f\|_{L^{p'}(\mathbb F_q^d, dm)}\le 1} | < g, f>_{(\mathbb F_q^d, dm)}|                         .$
\end{enumerate}
\end{lemma}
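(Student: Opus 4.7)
The plan is to prove Lemma \ref{l3.3} by the standard $L^p$--$L^{p'}$ duality argument, which is particularly transparent here because both $(V, d\sigma)$ and $(\mathbb F_q^d, dm)$ are finite positive measure spaces, so all norms are finite sums and no approximation or limiting procedure is required. Parts (1) and (2) have identical structure, so I would write out the argument for part (1) and note that part (2) follows by running the same proof on $(\mathbb F_q^d, dm)$ in place of $(V, d\sigma)$.

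The ``$\le$'' direction $\sup_{\|g\|_{L^{p'}(V, d\sigma)}\le 1} |\langle f, g\rangle_{(V, d\sigma)}| \le \|f\|_{L^p(V, d\sigma)}$ is an immediate consequence of Hölder's inequality:
\[
|\langle f, g\rangle_{(V, d\sigma)}| \le \int_V |f(x)||g(x)|\,d\sigma(x) \le \|f\|_{L^p(V, d\sigma)}\|g\|_{L^{p'}(V, d\sigma)} \le \|f\|_{L^p(V, d\sigma)}.
\]

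For the reverse direction I would exhibit an explicit $g$ with $\|g\|_{L^{p'}(V, d\sigma)}\le 1$ that realizes the equality. Assume $f\not\equiv 0$ (otherwise both sides are $0$) and set $\operatorname{sgn}(z):=z/|z|$ for $z\ne 0$, $\operatorname{sgn}(0):=0$; the key identity is $z\,\overline{\operatorname{sgn}(z)}=|z|$. The definition of $g$ splits into three cases:
\textbf{(a)} For $1<p<\infty$, put $g(x):=\|f\|_{L^p(V, d\sigma)}^{1-p}|f(x)|^{p-1}\operatorname{sgn}(f(x))$. Using $(p-1)p'=p$, a direct computation gives $\|g\|_{L^{p'}(V, d\sigma)}=1$, and $f(x)\overline{g(x)}=\|f\|_{L^p(V, d\sigma)}^{1-p}|f(x)|^p$ yields $\langle f, g\rangle_{(V, d\sigma)}=\|f\|_{L^p(V, d\sigma)}$.
\textbf{(b)} For $p=1$, take $g(x):=\operatorname{sgn}(f(x))$; then $\|g\|_{L^\infty(V, d\sigma)}\le 1$ and $\langle f, g\rangle_{(V, d\sigma)}=\int_V|f|\,d\sigma=\|f\|_{L^1(V, d\sigma)}$.
\textbf{(c)} For $p=\infty$, pick $x_0\in V$ with $|f(x_0)|=\|f\|_{L^\infty(V, d\sigma)}$ and set $g(x):=|V|\,\operatorname{sgn}(f(x_0))\,\mathbf 1_{\{x_0\}}(x)$. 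Since $d\sigma$ puts mass $|V|^{-1}$ on each point, $\|g\|_{L^1(V, d\sigma)}=|V|^{-1}\cdot|V|\cdot 1=1$ and $\langle f, g\rangle_{(V, d\sigma)}=|V|^{-1}\cdot|V|\cdot|f(x_0)|=\|f\|_{L^\infty(V, d\sigma)}$.

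No real obstacle arises: the whole argument is elementary once the extremizer $g$ has been identified, and the only bookkeeping concerns the normalization of each measure ($|V|^{-1}$ per point for $d\sigma$, counting for $dm$). This affects only the combinatorial constant in the $p=\infty$ extremizer when part (2) is treated by the same scheme (for $dm$ one simply takes $g(m)=\operatorname{sgn}(f(m_0))\mathbf 1_{\{m_0\}}(m)$ at a maximizer $m_0$), and not the structure of the proof.
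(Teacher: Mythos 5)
Your proposal is correct and follows essentially the same route as the paper: Hölder for one inequality, then an explicit extremizer in each of the three cases $p=1$, $1<p<\infty$, $p=\infty$ (your $\|f\|^{1-p}|f|^{p-1}\operatorname{sgn}(f)$ is literally the paper's $|f|^{p-2}f/\|f\|^{p-1}$, and your $p=\infty$ choice matches the paper's $g$ supported at a maximizer with the weight $|V|$ to compensate for the normalized measure), with part (2) obtained by running the identical argument on $(\mathbb F_q^d, dm)$. The only cosmetic difference is your use of $\operatorname{sgn}$ notation, which sidesteps writing $|f|^{p-2}f$ at points where $f$ vanishes; the substance is the same.
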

\begin{proof}
\textbf{(Proof of the first conclusion)} By H\"{o}lder's inequality, we have
$$ \displaystyle\|f\|_{L^p(V, d\sigma)}\ge\sup_{\|g\|_{L^{p'}(V, d\sigma)}\le 1} | < f, g>_{(V, d\sigma)}|.
$$
 Now, we prove the following reverse inequality:
 $$\displaystyle\|f\|_{L^p(V, d\sigma)}\le\sup_{\|g\|_{L^{p'}(V, d\sigma)}\le 1} | < f, g>_{(V, d\sigma)}|.$$
Since there is nothing to prove in the case that $f\equiv 0$, we can assume $f(x)\neq 0$ for some $x$ in $V.$
We consider three cases. For simplicity, the notation $(V ,d\sigma)$ is omitted from now on.

\begin{itemize}
  \item [$\mathbf{Case \,1.}$] If $p=1,$ then $p'=\infty.$ In this case,  we take
$$
g(x):= 
 \left\{
 \begin{array}{cc}
 \displaystyle\frac{{f(x)}}{|f(x)|} & \mbox{if}  \quad f(x) \neq0\\
0 & \mbox{if} \quad f(x)=0.
\end{array}
\right.
$$

Then $\|g\|_{L^\infty}=1$ and $|<f, g>|=\|f\|_{L^1}.$

  \item [$\mathbf{Case \,2.}$] If $1< p < \infty$, then $1< p'=p/(p-1) <\infty.$ We choose
$$
g(x):=\frac{|f(x)|^{p-2}{f(x)}}{\|f\|_{L^p}^{p-1}}
$$
It is not hard to check that  $\|g\|_{L^{p'}}=1$ and $ | < f, g>|=\|f\|_{L^p}.$





 \item [$\mathbf{Case \,3.}$] If  $p=\infty,$ then $p'=1.$ Let $x_0$ be an element in $V$ such that 
 $$|f(x_0)|=\max\limits_{x\in V} |f(x)|.$$ 
 In other words, $|f(x_0)|=||f||_{\infty}.$
We set
$
g(x):= 
 \left\{
 \begin{array}{cc}
 \displaystyle\frac{{f(x_0)}|V|}{|f(x_0)|} & \mbox{if}  \quad x=x_0\\
0 & \mbox{if} \quad x\ne x_0.
\end{array}
\right.
$

Then we have $\|g\|_{L^1}=1$ and $| < f, g>|=|f(x_0)|=\|f\|_{L^\infty}.$
\end{itemize}
We have completed the proof of the first part of the lemma.  The second part of  Lemma \ref{l3.3} is almost identical with that of the first part except we  use  the counting measure $dm$ and $\mathbb F_q^d$ in the place of $d\sigma$ and $V.$  In fact, Lemma \ref{l3.3} holds for any measure spaces in the finite field setting.
\end{proof}

We are ready to prove Theorem \ref{ThmDual}.\\
\textbf{(Case 1)} Assume that the first part of Theorem \ref{ThmDual} holds.
Then, using the first conclusion of Lemma \ref{l3.3}, the equality \eqref{InnerP},  H\"{o}lder's inequality, and our assumption, we   obtain the statement of the second part of Theorem \ref{ThmDual} as follows:
\begin{align*} \| \widehat{g}\|_{L^{p'}(V, d\sigma)} &=\sup_{||f||_{L^p(V, d\sigma)} \le 1} |<f, \widehat{g}>_{(V, d\sigma)}|
=\sup_{||f||_{L^p(V, d\sigma)} \le 1} |<(fd\sigma)^{\vee}, g>_{(\mathbb F_q^d, dm)}|\\
&\le \sup_{||f||_{L^p(V, d\sigma)} \le 1} || (fd\sigma)^{\vee}||_{L^r(\mathbb F_q^d, dm)} ||g||_{L^{r'}((\mathbb F_q^d, dm)} \\
&\le \sup_{||f||_{L^p(V, d\sigma)} \le 1} C ||f||_{L^p(V, d\sigma)} ||g||_{L^{r'}((\mathbb F_q^d, dm)} \le C||g||_{L^{r'}((\mathbb F_q^d, dm)}. 
\end{align*}
\textbf{(Case 2)} Assume that the second part of Theorem \ref{ThmDual} is statisfied. 
In this case, we use the second conclusion of Lemma \ref{l3.3},  the equality \eqref{InnerP},  H\"{o}lder's inequality, and our assumption. Then the statement of the first part of Theorem \ref{ThmDual} is proven as follows:
\begin{align*} \| (fd\sigma)^{\vee}\|_{L^{r}(\mathbb F_q^d, dm)} &=\sup_{||g||_{L^{r'}(\mathbb F_q^d, dm)} \le 1} 
|<g, (fd\sigma)^{\vee}>_{(\mathbb F_q^d, dm)}|
=\sup_{||g||_{L^{r'}(\mathbb F_q^d, dm)} \le 1} |< \widehat{g}, f>_{(V, d\sigma)}|\\
&\sup_{||g||_{L^{r'}(\mathbb F_q^d, dm)} \le 1} || \widehat{g}||_{L^{p'}(V, d\sigma)} ||f||_{L^{p}(V, d\sigma)} \\
&\le \sup_{||g||_{L^{r'}(\mathbb F_q^d, dm)} \le 1} C ||g||_{L^{r'}(\mathbb F_q^d, dm)}||f||_{L^p(V, d\sigma)}  \le C||f||_{L^p(V, d\sigma)}, 
\end{align*}
as desired.

\section*{Acknowledgements}
Authors thank anonymous reviewers for the valuable comments which helped us improve the quality of our paper.\\

A. Iosevich was partially supported by the NSA Grant H98230-15-1-0319. D. Koh was supported by Basic Science Research Program through the National
Research Foundation of Korea(NRF) funded by the Ministry of Education, Science
and Technology(NRF-2018R1D1A1B07044469). T. Pham was supported by Swiss National Science Foundation grant P400P2-183916. Chun-Yen Shen was supported in part by MOST, through grant 108-2628-M-002-010-MY4.


 \bibliographystyle{amsplain}

\end{document}